\def\[{\begin{equation}}
\def\]{\end{equation}}
\newtheorem{assum}{Assumption}[section]
\newtheorem{exam}{Example}[section]
\numberwithin{equation}{section}
\begin{document}
\graphicspath{{./PIC/}}
\title{Further study on tensor absolute value equations}

\author{Chen Ling \and Weijie Yan \and  Hongjin He \and Liqun Qi}

\institute{C. Ling\and W. Yan\and  H. He \at
Department of Mathematics, School of Science, Hangzhou Dianzi University, Hangzhou, 310018, China.\\
\email{macling@hdu.edu.cn}
\and  W. Yan \at
\email{yanweijie1314@163.com}
\and H. He (\Letter) \at
\email{hehjmath@hdu.edu.cn}
\and L. Qi \at
Department of Applied Mathematics, The Hong Kong Polytechnic University, Hung Hom, Kowloon, Hong Kong. \\
\email{maqilq@polyu.edu.hk}
 }

\date{Received: date / Accepted: date}

\maketitle

\begin{abstract}
In this paper, we consider the {\it tensor absolute value equations} (TAVEs), which is a newly introduced problem in the context of multilinear systems. Although the system of TAVEs is an interesting generalization of matrix {\it absolute value equations} (AVEs), the well-developed theory and algorithms for AVEs are not directly applicable to TAVEs due to the nonlinearity (or multilinearity) of the problem under consideration. Therefore, we first study the solutions existence of some classes of TAVEs with the help of degree theory, in addition to showing,  by fixed point theory, that the system of TAVEs has at least one solution under some checkable conditions. Then, we give a bound of solutions of TAVEs for some special cases. To find a solution to TAVEs, we employ the generalized Newton method and report some preliminary results.
\end{abstract}

\keywords{Tensor absolute value equations \and ${\rm H}^+$-tensor \and P-tensor \and Copositive tensor \and Generalized Newton method.}

\section{Introduction}\label{Introd}
The system of {\it absolute value equations} (AVEs) investigated in literature is given by
\begin{equation}\label{AVEs}
Ax-|x|=b,
\end{equation}
where $A\in \mathbb{R}^{n\times n}$, $b\in \mathbb{R}^n$, and $|x|$ denotes the vector with absolute values of each component of $x$. The importance of AVEs \eqref{AVEs} has been well documented in the monograph  \cite{CPS92} due to its equivalence to the classical {\it linear complementarity problems}. More generally, Rohn \cite{R04} introduced the following problem
\begin{equation}\label{GAVE}
Ax+B|x|=b,
\end{equation}
where $A,B\in \mathbb{R}^{m\times n}$ and $b\in \mathbb{R}^m$. Apparently, \eqref{GAVE} covers \eqref{AVEs} with the setting of $B$ being a negative identity matrix. In what follows, we also call such a general problem \eqref{GAVE} a system of AVEs for simplicity. Since the seminal work \cite{MM06} investigated the existence and nonexistence of solutions to the system of AVEs (\ref{AVEs}) in 2006, the system of AVEs has been studied extensively by many researchers. In the past decade, a series of interesting theoretical results including NP-hardness \cite{M07,MM06}, solvability \cite{MM06,R04}, and equivalent reformulations \cite{M07,MM06,P09} of the system of AVEs have been developed. Also, many efficient algorithms have been designed to solve the system of AVEs, e.g., see \cite{CQZ11,HHZ11,IIA15,M09,MYSC17,ZW09} and references therein.

In the current numerical analysis literature, considerable interests have arisen in extending concepts from linear algebra to the setting of multilinear algebra due to the powerfulness of the multilinear algebra in the real-world applications, e.g. see  \cite{CCLQ18,GLY15,LN15,WQZ09} and the most recent monograph \cite{QCC18}. Therefore, in this paper, we consider the so-named {\it tensor absolute equations} (TAVEs), which refers to the task of finding an $x\in{\mathbb R}^n$ such that
\begin{equation}\label{TAVEs}
{\cal A}x^{p-1}+\mathcal{B}|x|^{q-1} = b,
\end{equation}
where ${\cal A}$ is a $p$-th order $n$-dimensional square tensor, ${\cal B}$ is a $q$-th order $n$-dimensional square tensor, and $b\in \mathbb{R}^n$. In the paper, we are more interested in the case of \eqref{TAVEs} with $p\geq q\geq 2$ due to its real-world applications listed later. Throughout, for given two integers $m$ and $n$, we call $\mathcal{A} = (a_{i_1i_2\ldots i_m} )$, where $a_{i_1i_2\ldots i_m}\in \mathbb{R}$ for $1 \leq i_1,i_2,\ldots,i_m \leq n$, a real $m$-th order $n$-dimensional square tensor. For notational simplicity, we denote the set of all real $m$-th order $n$-dimensional square tensors by $\mathbb{T}_{m,n}$.
Given a tensor $\mathcal{A}=(a_{i_1i_2\ldots i_m})\in \mathbb{T}_{m,n}$ and a vector $x=(x_1,x_2,\ldots,x_n)^\top\in \mathbb{R}^n$, $\mathcal{A}{x}^{m-1}$ is defined as a vector, whose $i$-th component
is given by
\begin{equation}\label{Axm-1}
(\mathcal{A}{x}^{m-1})_i=\sum_{i_2,\ldots,i_m=1}^na_{ii_2\ldots i_m}x_{i_2}\cdots x_{i_m}, \quad i=1,2,\ldots,n.
\end{equation}
Moreover, ${\mathcal A}x^{m-2}$ denotes an $n\times n$ matrix whose $ij$-th component is given by
\begin{equation}\label{Axm-2}
({\mathcal A}x^{m-2})_{ij}:= \sum^n_{i_3, ..., i_m = 1}
a_{ij i_3 ... i_m}x_{i_3}\cdots x_{i_m} ,\quad i,\;j=1,2,\ldots,n.
\end{equation}
Obviously, TAVEs \eqref{TAVEs} becomes the system of AVEs when both tensors ${\mathcal A}$ and ${\mathcal B}$ reduce to matrices, and in particular, TAVEs reduces to the multilinear system (i.e., by taking ${\mathcal B}|x|^{q-1}={\bm 0}$ ) studied in recent work \cite{DW16,HLQZ18,LXX17,XJW18}, which has found many important applications in data mining and numerical partial  differential equations (e.g., see \cite{DW16,LN15}), to name just a few. Most recently, Du et al. \cite{DZCQ18} considered another special case of \eqref{TAVEs} with the setting of ${\mathcal B}$ being a negative $p$-th order $n$-dimensional unit tensor (i.e., ${\mathcal B}|x|^{q-1}$ reduces to $-|x|^{[p-1]}$), which is equivalent to a generalized tensor complementarity problem. Especially, when we consider the case where $p>q=2$ and ${\mathcal B}$ is a negative identity matrix (i.e., ${\mathcal B}|x|^{q-1}=-|x|$), it is clear that the resulting TAVEs \eqref{TAVEs} is equivalent to the following generalized tensor complementarity problem
$$
{\bm 0}\leq (\mathcal{A}x^{p-1}+x-b)\perp (\mathcal{A}x^{p-1}-x-b)\geq {\bm 0}.
$$
Additionally, if we restrict the variable $x$ being nonnegative, the system of TAVEs \eqref{TAVEs} is a fundamental model for characterizing the multilinear pagerank problem (e.g., see \cite{GLY15}). Hence, from the above two motivating examples, we are particularly concerned with the system of TAVEs \eqref{TAVEs} with the case where $p\geq q\geq 2$.

It can be easily seen from the definition of tensor-vector product (see \eqref{Axm-1}) that the system of TAVEs \eqref{TAVEs} is a special system of nonlinear  equations. Hence, all theory and algorithms tailored for the system of AVEs are not easily applicable to TAVEs \eqref{TAVEs} due to the underlying nonlinearity (or multilinearity). Moreover, the potentially nonsmooth term ${\mathcal B}|x|^{q-1}$ in \eqref{TAVEs} would make the theoretical findings, including the existence and boundedness of solutions, different with the cases of smooth nonlinear equations. Therefore, one emergent question is that whether the system of TAVEs \eqref{TAVEs} has solutions or not? If yes, which kind of tensors in \eqref{TAVEs} could ensure the existence of solutions? To answer these questions, most recently, Du et al. \cite{DZCQ18} first studied the special case of \eqref{TAVEs} with a negative unit tensor in the absolute value term (i.e., ${\mathcal B}$ is a negative unit tensor and $p=q$ in \eqref{TAVEs}), where they proved that such a reduced system has a solution for some structured tensors (e.g., ${\mathcal A}$ is a $Z$-tensor). However, the appearance of  a general tensor ${\mathcal B}$ in the absolute value term would completely change the existing results, including the solutions existence and algorithm, tailored for the special case of \eqref{TAVEs} studied in \cite{DZCQ18}.  In this paper, we make a further study on the TAVEs \eqref{TAVEs}. Specifically, we are interested in the general form \eqref{TAVEs}, where we allow the case that the two tensors ${\mathcal A}$ and ${\mathcal B}$ have different order, but with $p\geq q\geq 2$ from an application perspective. First, we prove the nonemptiness and compactness of the solutions set of general TAVEs with the help of degree theory, in addition to showing, by fixed point theory, that the system of TAVEs has at least one solution under some checkable conditions. Then, we derive a bound of solutions of TAVEs with the special case $p=q$. Finally, to find a solution to the general form of TAVEs \eqref{TAVEs} (where we further allow $p<q$), we employ the well-developed generalized Newton method to the problem under consideration. The preliminary computational results show that the simplest generalized Newton method is a highly probabilistic reliable solver for TAVEs.

This paper is organized as follows. In Section \ref{Prelim}, we recall some definitions and basic properties about tensors. In Section \ref{Existence}, we present three sufficient conditions for the solutions existence of the system of TAVEs. Here, the first two theorems on solutions existence are established via the degree-theoretic ideas, and last theorem is proved in the context of fixed point theory. Moreover, in Section \ref{Bounds}, we analyze the bound of solutions for the special case of TAVEs. To find solutions of TAVEs \eqref{TAVEs},  we employ the simplest generalized Newton method and investigate its numerical performance in Section \ref{Alg}. Finally, we complete this paper by drawing some concluding remarks in Section \ref{Conclusion}.

\medskip

%Obviously, $\mathbb{T}_{m,n}$ is a linear space of dimension $n^m$.
\noindent{\bf Notation}. As usual, $\mathbb{R}^n$ denotes the space of $n$-dimensional real column vectors. $\mathbb{R}_+^n=\{x=(x_1,x_2,\ldots,x_n)^\top\in \mathbb{R}^n:x_i\geq 0,~~\forall~i=1,2,\ldots,n\}$. A vector of zeros in a real space of arbitrary dimension will be denoted by ${\bm 0}$. For any $x, y\in \mathbb{R}^n$, the Euclidean inner product is denoted by $x^\top y$, and the Euclidean norm $\|x\|$ is denoted as $\|x\|=\sqrt{x^\top x}$.   For given $\mathcal{A}=(a_{i_1i_2\ldots i_m})\in \mathbb{T}_{m,n}$, if the entries $a_{i_1i_2\ldots i_m}$ are invariant under any permutation of their indices, then $\mathcal{A}$ is called a symmetric tensor.
%Denote the set of all real $m$-th order $n$-dimensional symmetric  tensors by $\mathbb{S}_{m,n}$.
In particular, for every given index
$i\in [n]:=\{1,2,\ldots,n\}$, if an $(m-1)$-th order $n$-dimensional square tensor $\mathcal{A}_i:=(a_{ii_2\ldots i_m})_{1 \leq i_2,\ldots,i_m \leq n}$ is symmetric, then $\mathcal{A}$ is called a semi-symmetric tensor with respect to the indices $\{i_2,\ldots,i_m\}$.
For given $\mathcal{A}=(a_{i_1i_2\ldots i_m} )\in \mathbb{T}_{m,n}$, denote the $\infty$-norm of $\mathcal{A}$ by $$\|\mathcal{A}\|_{\infty} = \max\limits_{1\leq i\leq n}\displaystyle\sum_{i_2,\ldots,i_m = 1}^n |a_{ii_2\ldots i_m}|,$$
and the (squared) Frobenius norm of ${\mathcal A}$ is defined as the sum of the squares of its elements, i.e.,
$$\|{\mathcal A}\|_{\rm Frob}^2:=\sum_{i_1=1}^n\cdots\sum_{i_m=1}^n a_{i_1i_2\ldots i_m}^2.$$
Denote the unit tensor in $\mathbb{T}_{m,n}$ by $\mathcal{I}=(\delta_{i_1\ldots i_m})$, where $\delta_{i_1\ldots i_m}$ is the Kronecker symbol
$$
\delta_{i_1\ldots i_m}=\left\{
\begin{array}{ll}
1,&\;\;{\rm if~}i_1=\ldots =i_m,\\
0,&\;\;{\rm otherwise}.
\end{array}
\right.
$$
With the notation \eqref{Axm-1}, we define ${\mathcal A}x^m = x^\top ({\mathcal A}x^{m-1})$ for ${\mathcal A}\in {\mathbb T}_{m,n}$ and $x\in {\mathbb R}^n$. Moreover, for a given scalar $s>0$, we denote $x^{[s]}=(x_1^s,x^s_2,\ldots,x_n^s)^\top\in {\mathbb R}^n$.
For a smooth (continuously differentiable) function $F:\mathbb{R}^n\rightarrow \mathbb{R}^n$, we denote the Jacobian of $F$ at $x\in \mathbb{R}^n$ by ${\mathscr D}F(x)$, which is an $n\times n$ matrix.

\section{Preliminaries}\label{Prelim}
In this section, we summarize some definitions and properties on tensors that will be used in the coming analysis.

\begin{definition}\label{ERdef}
 Let ${\mathcal A}\in {\mathbb T}_{p,n}$. We say that ${\mathcal A}$ is an ${\rm H}^+$-tensor, if there exists no $(x,t)\in(\mathbb{R}^n\backslash\{ {\bm 0}\})\times\mathbb{R}_+$ such that
\begin{equation}\label{equation1.2}
({\mathcal A+t\mathcal{I}})x^{p-1}={\bm 0},
\end{equation}
where $\mathcal{I}$ is a unit tensor in $\mathbb{T}_{p,n}$. In particular,  ${\mathcal A}$ is called a ${\rm WH}^+$-tensor if there exists no $(x,t)\in(\mathbb{R}_+^n\backslash\{ {\bm 0}\})\times\mathbb{R}_+$ satisfying \eqref{equation1.2}.
\end{definition}

When the order of ${\mathcal A}$ is $p=2$ (i.e., $\mathcal{A}$ is an $n\times n$ matrix), a ${\rm H}^+$-tensor $\mathcal{A}$ is also called a ${\rm H}^+$-matrix. It is obvious from \eqref{equation1.2} that, ${\mathcal A}$ is an ${\rm H}^+$-matrix if and only if $\mathcal{A}$ has no non-positive eigenvalues.
\begin{definition}[\cite{Qi13}]\label{copositive}
Let $\mathcal{A} \in \mathbb{T}_{p,n}$. We say that $\mathcal{A}$ is a copositive (or strictly copositive) tensor, if $\mathcal{A} x^p\geq 0 ~({\rm or }\; \mathcal{A} x^p> 0)$ for any vector $x \in \mathbb{R}_+^n~({\rm or }\;x\in \mathbb{R}_+^n\backslash\{{\bm 0}\})$.
\end{definition}

\begin{definition}[\cite{SQ15}]\label{def2.3}
Let $\mathcal{A} \in \mathbb{T}_{p,n}$. We say that $\mathcal{A}$ is a P-tensor, if it holds that $\max\limits_{1\leq i\leq n}x_i(\mathcal{A}x^{p-1})_i > 0$ for any vector $x \in \mathbb{R}^n\backslash\{{\bm 0}\}$.
\end{definition}

\begin{proposition}\label{P-WR}
Let $\mathcal{A} \in \mathbb{T}_{p,n}$. If $\mathcal{A}$ is a strictly copositive tensor, then $\mathcal{A}$ is a ${\rm WH}^+$-tensor. If $\mathcal{A}$ is a P-tensor, then $\mathcal{A}$ is an ${\rm H}^+$-tensor.
\end{proposition}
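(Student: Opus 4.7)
The plan is to prove both implications by contradiction, translating the defining non-existence condition in Definition \ref{ERdef} into the explicit equation $\mathcal{A}x^{p-1}=-t\,x^{[p-1]}$ (via $\mathcal{I}x^{p-1}=x^{[p-1]}$) and then testing it against the strict inequality in the hypothesis.

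For the copositive-$\mathrm{WH}^+$ implication, I would suppose toward contradiction that some $(x,t)\in(\mathbb{R}_+^n\setminus\{{\bm 0}\})\times\mathbb{R}_+$ satisfies $(\mathcal{A}+t\mathcal{I})x^{p-1}={\bm 0}$, i.e. $\mathcal{A}x^{p-1}=-t\,x^{[p-1]}$. Taking the inner product of both sides with $x$ and using the identity $\mathcal{A}x^p=x^{\top}(\mathcal{A}x^{p-1})$ yields
\[
\mathcal{A}x^p=-t\sum_{i=1}^n x_i^p.
\]
Since $t\ge 0$ and every $x_i\ge 0$, the right-hand side is nonpositive, whereas strict copositivity (Definition \ref{copositive}) forces $\mathcal{A}x^p>0$ on the nonzero $x\in\mathbb{R}_+^n$. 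Contradiction.

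For the P-tensor-$\mathrm{H}^+$ implication I would follow the same template. Assuming $(x,t)\in(\mathbb{R}^n\setminus\{{\bm 0}\})\times\mathbb{R}_+$ satisfies $\mathcal{A}x^{p-1}=-t\,x^{[p-1]}$, I would read off the $i$-th component $(\mathcal{A}x^{p-1})_i=-t\,x_i^{p-1}$, multiply by $x_i$ to obtain
\[
x_i\,(\mathcal{A}x^{p-1})_i=-t\,x_i^p,\qquad i\in[n],
\]
and then invoke the standard setting that $p$ is even (under which $x_i^p\ge 0$) to conclude $\max_{1\le i\le n} x_i(\mathcal{A}x^{p-1})_i\le 0$. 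This directly contradicts the P-tensor inequality in Definition \ref{def2.3}.

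Neither direction hides a genuine obstacle; the only care needed is bookkeeping the sign patterns produced by the powers $x_i^{p-1}$ and $x_i^p$, and checking the boundary case $t=0$. In that case the two chains of inequalities collapse to $\mathcal{A}x^p=0$ and $\max_i x_i(\mathcal{A}x^{p-1})_i=0$, respectively, each of which remains inconsistent with the strict positivity demanded by the hypothesis, so the same contradiction is obtained.
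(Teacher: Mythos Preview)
Your proof is correct and follows essentially the same route as the paper's: both parts argue by contradiction, rewrite $(\mathcal{A}+t\mathcal{I})x^{p-1}={\bm 0}$ as $\mathcal{A}x^{p-1}=-t\,x^{[p-1]}$, and then either contract with $x$ (copositive case) or read off componentwise products $x_i(\mathcal{A}x^{p-1})_i=-t\,x_i^p$ (P-tensor case) to reach the required nonpositivity. The only cosmetic difference is that the paper states ``$p$ must be even'' as a known consequence of $\mathcal{A}$ being a P-tensor (citing \cite{YY14}) rather than as a standing assumption, so you may wish to phrase that step the same way.
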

\begin{proof} Let $\mathcal{A}$ be a strictly copositive tensor. Suppose that $\mathcal{A}$ is not a ${\rm WH}^+$-tensor. Then, it follows from Definition \ref{ERdef} that there exists $(\bar x, \bar t)\in (\mathbb{R}_+^n\backslash\{ {\bm 0}\})\times\mathbb{R}_+$ such that (\ref{equation1.2}) holds. Consequently, we know that ${\mathcal A}\bar x^p=-\bar t \sum_{i=1}^n\bar x_i^p\leq 0$, which contradicts to the given condition.

Let $\mathcal{A}$ be a P-tensor. Then $p$  must be even. Suppose that $\mathcal{A}$ is not an ${\rm H}^+$-tensor. Then, it follows from Definition \ref{ERdef} that there exists $(\bar x, \bar t)\in (\mathbb{R}^n\backslash\{ {\bm 0}\})\times\mathbb{R}_+$ such that (\ref{equation1.2}) holds. Therefore, we have
$$
\bar x_i(\mathcal{A}\bar x^{p-1})_i + \bar t\bar x^p_i=0, ~~~\forall ~i=1,2,\ldots,n,
$$
which implies
\begin{equation}\label{TThr}
\max\limits_{1\leq i \leq n}\bar x_i(\mathcal{A}\bar x^{p-1})_i= -\min\limits_{1\leq i \leq n}\bar t \bar x_i^p\leq 0.
\end{equation}
It contradicts to the condition that $\mathcal{A}$ is a P-tensor. The proof is completed.
\qed\end{proof}

We have shown that a strictly copositive tensor must be a ${\rm WH}^+$-tensor, but not conversely. The following example is to show that a ${\rm WH}^+$-tensor is not necessarily a strictly copositive tensor.

\begin{exam}\label{exam11-01}
Consider the case where $p=2$ and  $$
\mathcal{A}=\left[
\begin{array}{cc}
1&4\\
1&-2
\end{array}
\right].
$$
By taking $\bar x=(0,4)^\top\in \mathbb{R}_+^2\backslash \{{\bm 0}\}$, we know $\mathcal{A}\bar x^2=-32<0$, which means that $\mathcal{A}$ is not a strictly copositive tensor. However, we claim that $\mathcal{A}$ is a ${\rm WH}^+$-tensor, i.e., there exists no $(x,t)\in(\mathbb{R}_+^2\backslash\{ {\bm 0}\})\times\mathbb{R}_+$ such that \eqref{equation1.2} holds. Suppose that there exists $(\bar x,\bar t)\in(\mathbb{R}_+^2\backslash\{ {\bm 0}\})\times\mathbb{R}_+$ such that \eqref{equation1.2} holds, i.e.,
\begin{equation}\label{eqta}
\left\{
\begin{array}{l}
\bar x_1+4\bar x_2+\bar t\bar x_1=0\\
\bar x_1-2\bar x_2+\bar t\bar x_2=0.
\end{array}
\right.
\end{equation} Since $\bar x\neq {\bm 0}$, we have
$$
\left|
\begin{array}{cc}
1+\bar t&4\\
1&\bar t-2
\end{array}
\right|=0,
$$
which implies $\bar t^2-\bar t-6=(\bar t-3)(\bar t+2)=0$. Since $\bar t\geq 0$, we obtain $\bar t=3$. Consequently, from \eqref{eqta}, we know that $\bar x_1+\bar x_2=0$, which contradicts to the fact that $\bar x\in \mathbb{R}_+^2\backslash\{{\bm 0}\}$. \end{exam}

It was proved by Qi \cite{Qi05} that  H-eigenvalues exist for an even order real symmetric tensor $\mathcal{A}$, and $\mathcal{A}$ is {\it positive definite} (PD) if and only if all of its
H-eigenvalues are positive, i.e., $\mathcal{A}$ is an ${\rm H}^+$-tensor. Hence, in the symmetric tensor case, the concepts of PD-, P- and ${\rm H}^+$-tensors are identical. We also know that if a tensor $\mathcal{A}\in \mathbb{T}_{p,n}$ is a P-tensor, then $p$ must be even, see \cite{YY14}. So, there does not exist an odd order symmetric ${\rm H}^+$-tensor. However, in the asymmetric case, the conclusion is not true, as showed by the following example, which also shows that  an ${\rm H}^+$-tensor is not necessarily a P-tensor for the asymmetric case.

\begin{exam}\label{exam11-1}
%Let $m=2$ and $$
%A=\left[
%\begin{array}{cc}
%-1&-1\\
%2&1
%\end{array}
%\right].$$
%It is easy to check that (\ref{equation1.2}) has no solution $(x,t)\in(\mathbb{R}^2\backslash\{{\bm 0}\})\times\mathbb{R}_+$, which means that $A$ is an WR-matrix. However, by taking $x=(2,-1)^\top\in \mathbb{R}^2\backslash \{{\bm 0}\}$, we know that $x_1(Ax)_1=-2$ and $x_2(Ax)_2=-3$, which implies that $A$ is not a P-matrix.
Let $m=3$ and let ${\mathcal A}=(a_{i_1i_2i_3})\in\mathbb{T}_{3,2}$ with $a_{111}=a_{112}=a_{211}=a_{212}=a_{222}=1$,  $a_{221}=-1$ and $a_{121}=a_{122}=0$. Then it is obvious that $\mathcal{A}$ is not a P-tensor, due to the fact that $m$ is an odd number. Moreover,  we claim that $\mathcal{A}$ is an ${\rm H}^+$-tensor, i.e., there are no $(x,t)\in(\mathbb{R}^2\backslash\{{\bm 0}\})\times\mathbb{R}_+$ such that \eqref{equation1.2} holds. In fact, for any $t\in \mathbb{R}$ and $x\in \mathbb{R}^2$ it holds that
 $$
 (\mathcal{A}+t\mathcal{I})x^2=
 \left (
 \begin{array}{c}
 (1+t)x_1^2+x_1x_2 \\
x_1^2+(1+t)x_2^2
\end{array}\right).
 $$
If there exists $(\bar x,\bar t)\in(\mathbb{R}^2\backslash\{{\bm 0}\})\times\mathbb{R}_+$ such that $(\mathcal{A}+\bar t\mathcal{I})\bar x^2={\bm 0}$, then $\bar x_1^2+(1+\bar t)\bar x_2^2=0$. Consequently, since $\bar t\geq 0$, we obtain $\bar x_1=\bar x_2=0$, which is a contradiction. % However, by taking $\tilde{x}=(-1,-1)^\top\in \mathbb{R}^2\backslash\{{\bm 0}\}$, we have
%$$
%\max\left\{\tilde{x}_1(\mathcal{A}\tilde{x}^2)_1,\tilde{x}_2(\mathcal{A}\tilde{x}^2)_2\right\}=-2<0,
%$$
%which means that $\mathcal{A}$ is not a P-tensor.
\end{exam}
\begin{remark} It is well known that $P$-tensor is a generalization of positive definite tensor, and many structured tensors, such as even order strongly doubly nonnegative tensor \cite{LQ14}, even order strongly completely positive tensor \cite{LQ14,QXX14} and even order Hilbert tensor \cite{SQ14a}, are the special type of positive definite tensors.
Moreover, as shown in Proposition \ref{P-WR} and Example \ref{exam11-1}, the concept of ${\rm H}^+$-tensor is a generalization of $P$-tensor. The set of all $P$-tensors includes many class of important structured tensors as its proper subset, for example, even order nonsingular $H$-tensor with positive diagonal entries \cite{DLQ15}, even order Cauchy tensor \cite{ChbQ15}  with mutually distinct entries of generating vector \cite{DLQ15}, even order strictly diagonally dominated tensor \cite{YY14}, and so on. If an even order $Z$-tensor $\mathcal{A}$ is a $B$-tensor \cite{SQ15b}, then $\mathcal{A}$ is also a $P$-tensor (see \cite[Th. 3.6]{YY14}).
\end{remark}

\begin{definition}\label{def2.2}
Let $\Psi, \Phi:\mathbb{R}^n\rightarrow \mathbb{R}^n $ be two continuous functions. We say that a set of elements $\{x^r\}_{=1}^\infty \subset \mathbb{R}^n$ is an exceptional family of elements
for $\Psi$ with respect to $\Phi$, if the following conditions are satisfied:\\
~~~~ (1) $\|x^r\|\rightarrow \infty$ as $r \rightarrow \infty $,\\
~~~~ (2) for each real number $r>0$, there exists $\mu_r>0$ such that
$$\Psi(x^r)=-\mu_r \Phi(x^r).$$
\end{definition}

\begin{definition}[\cite{Sha13}]\label{def1}
Let $\mathcal{A}$ (and $\mathcal{B}$) be an order $p \geq 2$ (and order $q\geq 1$) dimension $n$ tensor, respectively. Define the product $\mathcal{A} \cdot \mathcal{B}$ to be the following tensor $\mathcal{C}$ of order $(p-1)(q-1)+1$ and dimension $n$:
$$\mathcal{C}_{ij_1\ldots j_{p-1}} = \sum_{i_2,\ldots,i_p=1}^n ~a_{i i_2 \ldots i_p}b_{i_2 j_1} \cdots b_{i_p j_{p-1}}$$
where $i\in [n]$, and $j_1,\ldots,j_{p-1}\in[n]^{q-1}:=\overbrace{[n]\times\cdots\times[n]}^{q-1}$.
\end{definition}

\begin{remark} \label{remarkaB} When $q=1$ (i.e., $\mathcal{B}$ is a vector $x$), it is obvious that $\mathcal{A} \cdot x$ is a vector of dimension $n$, in this case, it holds that $\mathcal{A} \cdot x =\mathcal{A}x^{p-1}$; When $q=2$ (i.e., $\mathcal{B}$ is an $n\times n$ matrix), it is easy to check that $\mathcal{A} \cdot \mathcal{B} $ is a tensor of order $p$; Similarly, when $p=2$ (i.e., $\mathcal{A}$ is an $n\times n$ matrix), we know that $\mathcal{A} \cdot \mathcal{B} $ is a tensor of order $q$. Notice that, in the case when both $\mathcal{A}$ and $\mathcal{B}$ are matrices, or when $\mathcal{A}$ is a matrix and $\mathcal{B}$ is a vector, the tensor product $\mathcal{A}\cdot\mathcal{B}$ coincides with the usual matrix product. So it is a generalization of the matrix product. Here, we refer to \cite{Sha13} for more details.
\end{remark}

\begin{remark}\label{remark THd} Let $\mathcal{A}$ (and $\mathcal{B}, \mathcal{C}$) be an order $(p+1)$ (and order $(q+1)$, order $(m+1)$, respectively) dimension $n$ tensor. Then it holds that $\mathcal{A}\cdot(\mathcal{B}\cdot\mathcal{C})=(\mathcal{A}\cdot\mathcal{B})\cdot\mathcal{C}$. It is easy to check that, when $\mathcal{A}_1$ and $\mathcal{A}_2$ have the same order, we have $(\mathcal{A}_1 + \mathcal{A}_2) \cdot \mathcal{B} = \mathcal{A}_1 \cdot \mathcal{B} + \mathcal{A}_2 \cdot \mathcal{B}$; When $\mathcal{A}$ is a matrix, we have $\mathcal{A}\cdot(\mathcal{B}_1 + \mathcal{B}_2) = \mathcal{A}\cdot\mathcal{B}_1 + \mathcal{A}\cdot\mathcal{B}_2$.
\end{remark}

\begin{definition}[\cite{Sha13}]\label{Inverse} Let $\mathcal{A}\in \mathbb{T}_{p,n}$ and $\mathcal{B}\in \mathbb{T}_{q,n}$. If $\mathcal{A}\cdot \mathcal{B}=\mathcal{I}$, then $\mathcal{A}$ is called an order $p$ left inverse of $\mathcal{B}$, and $\mathcal{B}$ is called an order $q$ right inverse of $\mathcal{A}$.
\end{definition}

From Definition \ref{Inverse}, we know that, for given $\mathcal{A}\in \mathbb{T}_{p,n}$, $\mathcal{A}$ has an order $2$ left inverse if and only if there exists a nonsingular $n\times n$ matrix $Q$ such that $\mathcal{A} = Q \cdot \mathcal{I}$. Moreover, $Q^{-1}$ is the unique order $2$ left inverse of $\mathcal{A}$.

\begin{definition}[\cite{P10}]\label{def4}
Let $\mathcal{A}\in \mathbb{T}_{p,n}$. Then the majorization
matrix $M(\mathcal{A})$ of $\mathcal{A}$ is an $n\times n$ matrix with the entries $M(\mathcal{A})_{ij} = a_{ij\ldots j}$ for all $i,j=1,2,\ldots,n$.
\end{definition}

In \cite{LL16,SY16}, it has been proved that, for given $\mathcal{A}\in \mathbb{T}_{p,n}$, $\mathcal{A}$ has the unique order $2$ left inverse $M(\mathcal{A})^{-1}$, if and only if $M(\mathcal{A})$ is nonsingular and $\mathcal{A}$ is row diagonal (see \cite{SY16}).

 %For the definition of the row diagonal tensor, see . So, it is easy to check that $\mathcal{A}$ whether has the unique order $2$ left inverse $M(\mathcal{A})^{-1}$.

\section{Existence of solutions for TAVEs}\label{Existence}
In this section, we focus on studying the existence of solutions of TAVEs \eqref{TAVEs}. The main tools used here are degree-theoretic ideas. We begin this section with recalling some concepts and well-developed necessary results that will play pivot roles in the analysis.

Suppose that $\Omega$ is a bounded open set in $\mathbb{R}^n$, $U: \bar{\Omega}\rightarrow \mathbb{R}^n$ is continuous and $b\not\in U(\partial \Omega)$, where $\bar{\Omega}$ and $\partial \Omega$ denote, respectively, the closure and boundary of $\Omega$. Then the degree of $U$ over $\Omega$ with respect to $b$ is defined, which is an integer and will be denoted by ${\rm deg} (U, \Omega, b)$ (see \cite{FFG95,LN78} for more details on degree theory). If $U(x)=b$ has a unique solution, say, $x^*\in \Omega$, then, ${\rm deg}(U, \Omega, b)$ is constant over all bounded open sets $\Omega^\prime$ containing $x^*$ and contained in $\Omega$. Moreover, we recall the following two fundamental theorems, which can be found in \cite[p. 23]{I06}.

\begin{theorem}[Kronecker's Theorem]\label{th4}
Let $\Omega\subset \mathbb{R}^n$ be a bounded open set, $b\in \mathbb{R}^n$ and $U:\mathbb{R}^n\rightarrow \mathbb{R}^n$ be a continuous function. If ${\rm deg}(U,\Omega,b)$ is defined and non-zero, then the equation $U(x)=b$ has a solution in $\Omega$.
\end{theorem}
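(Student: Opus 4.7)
The plan is to argue by contrapositive using the \emph{solution property} of topological degree: if the equation $U(x)=b$ has no solution in $\Omega$, then $\deg(U,\Omega,b)=0$. A nonzero degree therefore forces the existence of at least one solution, which is exactly the claim. My approach reduces the general continuous case to the smooth setting, where the degree admits an explicit enumeration formula over preimages of a regular value.

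First I would set up the contradiction. Assume $U(x)\neq b$ for every $x\in\bar{\Omega}$. Combined with $b\notin U(\partial\Omega)$ (which is required for the degree to be defined), compactness of $\bar{\Omega}$ yields $\delta:=\min_{x\in\bar{\Omega}}\|U(x)-b\|>0$. Using the Stone--Weierstrass theorem, approximate $U$ uniformly on $\bar{\Omega}$ by a $C^1$ map $\tilde{U}$ with $\|\tilde{U}-U\|_\infty<\delta/3$; the triangle inequality then gives $\min_{x\in\bar{\Omega}}\|\tilde{U}(x)-b\|\geq 2\delta/3>0$. The straight-line homotopy $H_s(x)=(1-s)U(x)+s\tilde{U}(x)$ avoids $b$ on $\partial\Omega$, so homotopy invariance of degree yields $\deg(U,\Omega,b)=\deg(\tilde{U},\Omega,b)$.

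Next, I would invoke Sard's theorem to pick a regular value $b'$ of $\tilde{U}$ with $\|b'-b\|<\delta/3$; a second triangle inequality estimate guarantees $\tilde{U}(x)\neq b'$ on all of $\bar{\Omega}$. Continuity of the degree in its target (a standard axiom) gives $\deg(\tilde{U},\Omega,b)=\deg(\tilde{U},\Omega,b')$, and the classical formula for smooth maps at a regular value reads
\[
\deg(\tilde{U},\Omega,b')=\sum_{x\in\tilde{U}^{-1}(b')\cap\Omega}\mathrm{sgn}\,\det{\mathscr D}\tilde{U}(x),
\]
whose right-hand side is an empty sum, hence equal to $0$. Chaining the equalities gives $\deg(U,\Omega,b)=0$, contradicting the hypothesis.

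The main obstacle is not the logical skeleton, which is a short contrapositive, but the infrastructure of degree theory that must already be in place: well-definedness of the degree independently of the choice of approximation $\tilde{U}$ and of the regular value $b'$, homotopy invariance, and continuity in the target point. These ingredients are standard and are precisely why the result is customarily cited (as here from \cite{I06}) rather than re-derived; once they are granted, the argument above is just the short enumeration displayed.
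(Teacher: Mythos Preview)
Your argument is correct and is the standard contrapositive proof of the solution property of the Brouwer degree. Note, however, that the paper does not actually prove this statement: Theorem~\ref{th4} is simply recalled from \cite[p.~23]{I06} as a known tool, with no accompanying proof. There is therefore nothing to compare your approach against; you have supplied a valid proof where the paper only gives a citation, and you yourself correctly anticipated this in your final paragraph.
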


\begin{theorem}[Poincar\'{e}-Bohl Theorem]\label{th3}
Let $\Omega\subset \mathbb{R}^n$ be a bounded open set, $b\in \mathbb{R}^n$ and $U, V:\mathbb{R}^n\rightarrow \mathbb{R}^n$ be two continuous functions. If for all $x \in \partial\Omega$ the line segment $[U(x),V(x)]$ does not contain $b$,
then it holds that ${\rm deg}(U,\Omega,b) = {\rm deg}(V,\Omega,b)$.
\end{theorem}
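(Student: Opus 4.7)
The plan is to invoke the homotopy invariance property of the Brouwer degree, which is one of the axiomatic features of ${\rm deg}$ in the standard references cited in the excerpt. First I would introduce the straight-line homotopy $H:\bar{\Omega}\times[0,1]\to\mathbb{R}^n$ defined by $H(x,t):=(1-t)U(x)+tV(x)$, and observe that $H(\cdot,0)=U$, $H(\cdot,1)=V$, and $H$ is jointly continuous in $(x,t)$ because both $U$ and $V$ are continuous.

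The key step is to verify that the hypothesis on the segment $[U(x),V(x)]$ is precisely what one needs to keep $b$ off the boundary throughout the homotopy. Suppose, for contradiction, that there were $(x_0,t_0)\in\partial\Omega\times[0,1]$ with $H(x_0,t_0)=b$. Then $b=(1-t_0)U(x_0)+t_0V(x_0)$ would exhibit $b$ as a convex combination of $U(x_0)$ and $V(x_0)$, placing $b$ on the segment $[U(x_0),V(x_0)]$ and directly contradicting the assumption. Hence $b\notin H(\partial\Omega,t)$ for every $t\in[0,1]$, so ${\rm deg}(H(\cdot,t),\Omega,b)$ is well-defined for each $t\in[0,1]$.

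By the homotopy invariance property of the degree, the integer-valued function $t\mapsto{\rm deg}(H(\cdot,t),\Omega,b)$ is locally constant, and hence constant, on the connected interval $[0,1]$; evaluating at the two endpoints yields ${\rm deg}(U,\Omega,b)={\rm deg}(V,\Omega,b)$, which is exactly the desired conclusion. The main obstacle, in the sense of what one is trusting, is the homotopy invariance property itself, whose rigorous establishment (via smooth approximation together with Sard's theorem in the differentiable approach, or via simplicial approximation in the combinatorial approach) constitutes the technical heart of degree theory. Once that property is granted, the theorem reduces to the one-line verification above that the affine homotopy avoids $b$ on $\partial\Omega$ under the stated hypothesis.
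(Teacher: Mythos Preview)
Your argument is correct and is precisely the standard derivation of the Poincar\'e--Bohl theorem from the homotopy invariance axiom of the Brouwer degree. Note, however, that the paper does not supply its own proof of this statement: Theorem~\ref{th3} is quoted as a classical fact from \cite[p.~23]{I06}, so there is no in-paper argument to compare against. Your proof is exactly the one found in standard references on degree theory, and nothing further is required.
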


By Theorems \ref{th4} and \ref{th3}, we have the following theorem.
\begin{theorem}\label{th2.6}
Let $G:\mathbb{R}^n\rightarrow \mathbb{R}^n $ be a continuous function. Suppose that $G(x)={\bm 0}$ has only one zero solution and ${\rm deg}(G,B_r,{\bm 0})\neq 0$ for any $r>0$, where $B_r= \{x \in \mathbb{R}^n:\|x\| < r\}$. Then for the continuous function defined by
\begin{equation}\label{F(x)}
F(x)=\mathcal{A}x^{p-1}+\mathcal{B}|x|^{q-1}-b,
\end{equation}
 there exists either a solution to $F(x)={\bm 0}$ or an exceptional family of elements for $F$ with respect to $G$.
\end{theorem}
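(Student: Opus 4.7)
The plan is to argue by dichotomy: suppose $F(x)=\bm{0}$ has no solution and construct the required exceptional family one element per positive radius. The main tools will be Kronecker's theorem (Theorem~\ref{th4}) and the Poincar\'e--Bohl invariance (Theorem~\ref{th3}), both applied to the open balls $B_r$.

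Fix $r>0$ and suppose, toward an exceptional-family-style selection, that no $x\in\partial B_r$ admits a scalar $\mu>0$ with $F(x)=-\mu G(x)$. I would then check that the line segment $[F(x),G(x)]$ avoids $\bm{0}$ for every $x\in\partial B_r$: writing a generic point of this segment as $(1-t)F(x)+tG(x)$ with $t\in[0,1]$, the endpoint $t=0$ is excluded because $F$ is assumed to have no zero at all, the endpoint $t=1$ is excluded because $\|x\|=r>0$ and $\bm{0}$ is the unique zero of $G$, and any interior $t\in(0,1)$ rewrites as $F(x)=-\tfrac{t}{1-t}G(x)$, which is exactly the forbidden relation. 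Hence Theorem~\ref{th3} yields $\deg(F,B_r,\bm{0})=\deg(G,B_r,\bm{0})\neq 0$, and then Theorem~\ref{th4} produces a zero of $F$ in $B_r$, contradicting the standing no-solution hypothesis.

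Therefore, for every $r>0$ there must exist $x^r\in\partial B_r$ and $\mu_r>0$ such that $F(x^r)=-\mu_r G(x^r)$. Since $\|x^r\|=r\to\infty$ as $r\to\infty$, the collection $\{x^r\}$ fulfils both clauses of Definition~\ref{def2.2} and is the desired exceptional family for $F$ with respect to $G$. The only delicate bookkeeping is the correct elimination of the two segment endpoints $t=0$ and $t=1$, which is precisely where the non-solvability assumption on $F$ and the uniqueness of the zero of $G$ enter; beyond that, the hypothesis $\deg(G,B_r,\bm{0})\neq 0$ for every $r$ is exactly the input needed to transport nontriviality of the degree along the homotopy back to $F$, so no further structural use of $F$ (for instance, of the particular form $\mathcal{A}x^{p-1}+\mathcal{B}|x|^{q-1}-b$) is required here beyond its continuity.
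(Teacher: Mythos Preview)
Your proof is correct and follows essentially the same approach as the paper: both use the linear homotopy between $F$ and $G$, invoke Poincar\'e--Bohl (Theorem~\ref{th3}) to transfer the nonzero degree from $G$ to $F$, and then apply Kronecker (Theorem~\ref{th4}) to force a zero of $F$ unless the homotopy hits $\bm{0}$ on some boundary sphere. Your organization is slightly cleaner---by assuming at the outset that $F$ has no zero, you dispose of the $t=0$ endpoint uniformly and run a single contradiction per radius---whereas the paper splits into two global cases and then subcases on $t_r\in\{0,1\}\cup(0,1)$; but the mathematical content is identical.
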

\begin{proof}
For any real number $r>0$, let us denote the spheres of radius $r$:
$$S_r = \{x \in \mathbb{R}^n:\|x\| = r\}.$$
%and the open ball of radius $r$:
%$$B_r = \{x \in \mathbb{R}^n:\|x\| < r\}.$$
Obviously, we have $\partial B_r = S_r$. Consider the homotopy between the functions $G$ and $F$, which is defined by:
\begin{equation}
H(x,t) = tG(x) + (1-t)F(x), ~~\forall~(x,t) \in S_r \times [0,1].
\end{equation}
We now apply Theorem \ref{th3} to $H$. There are two cases:\par
~(i) There exists an $r>0$ such that $H(x,t)\neq {\bm 0}$ for any $x\in S_r$ and $t \in [0,1]$. Then by Theorem \ref{th3}, we know that ${\rm deg}(F,B_r,{\bm 0})={\rm deg}(G,B_r,{\bm 0})$. Consequently, it follows from ${\rm deg}(G,B_r,{\bm 0}) \neq 0$ that ${\rm deg}(F,B_r,{\bm 0}) \neq 0$. Moreover, by Theorem \ref{th4}, we know that the ball $B_r$ contains at least one solution to the equation $F(x)={\bm 0}$.\par
~(ii) For each $r>0$, there exists a vector $x^r \in S_r$ (i.e., $\|x^r\|=r$) and a scalar $t_r \in [0,1]$ such that
\begin{equation}
H(x^r,t_r)={\bm 0}.
\end{equation}
If $t_r = 0$, then $x^r$ solves equation $F(x) = {\bm 0}$. If $t_r = 1$, then by the definition of $H(x,t)$ we obtain
$$t_rG(x^r)+ (1-t_r)F(x^r) = G(x^r) = {\bm 0},$$
which implies $x^r={\bm 0}$, since $G(x)={\bm 0}$ has only one zero solution. It contradicts the fact that $\|x^r\| = r > 0$. If $0 < t_r < 1$, then by the definition of $H(x,t)$, we obtain
$$\frac{t_r}{(1-t_r)} G(x^r) + F(x^r)= \frac{1}{1-t_r}(t_r G(x^r) + (1-t_r)F(x^r)) = {\bm 0}. $$
Letting $\mu_r = \frac{t_r}{1-t_r}$, we have $F(x^r)+\mu_r G(x^r)={\bm 0}$. Due to the fact that $\|x^r\| = r$, we know that $\|x^r\|\rightarrow \infty$ as $r \rightarrow \infty $. Thus, from Definition {\ref{def2.2}}, we know that $\{x^r\}$ is an exceptional family of elements
for $F$ with respect to $G$.
\qed\end{proof}

We now state and prove some existence results on solutions of \eqref{TAVEs}. To this end, we first present the following lemma.

\begin{lemma}\label{Lemma01}
Let $m\geq 2$ be a given integer. Then for any vector $x\in \mathbb{R}^n$, it holds that
$$
\|x\|^{m-1}\leq  n^{\frac{m-2}{2}}\|x^{[m-1]}\|.
$$
%where $x^{[m-1]}=(x_1^{m-1},x_2^{m-1},\ldots, x_n^{m-1})^\top$.
\end{lemma}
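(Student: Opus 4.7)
The plan is to square both sides, reduce to a standard summation inequality for nonnegative numbers, and conclude by Hölder's inequality (equivalently, by the power mean inequality).

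First, since both sides of the claimed inequality are nonnegative, squaring yields the equivalent statement
\[
\left(\sum_{i=1}^n x_i^2\right)^{m-1}\;\le\;n^{m-2}\sum_{i=1}^n x_i^{2(m-1)}.
\]
Setting $y_i=x_i^2\ge 0$, this further reduces to
\[
\left(\sum_{i=1}^n y_i\right)^{m-1}\;\le\;n^{m-2}\sum_{i=1}^n y_i^{m-1},
\]
so the whole lemma reduces to a single scalar inequality among nonnegative reals.

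Second, I would establish this last inequality by applying Hölder's inequality to the pairing $1\cdot y_i$ with conjugate exponents $p=m-1$ and $p'=(m-1)/(m-2)$ (well defined for $m\ge 3$). This gives
\[
\sum_{i=1}^n y_i \;=\; \sum_{i=1}^n 1\cdot y_i \;\le\; \left(\sum_{i=1}^n 1\right)^{(m-2)/(m-1)}\left(\sum_{i=1}^n y_i^{m-1}\right)^{1/(m-1)}\;=\;n^{(m-2)/(m-1)}\left(\sum_{i=1}^n y_i^{m-1}\right)^{1/(m-1)},
\]
and raising this to the $(m-1)$-th power produces exactly the target inequality. Alternatively, the same bound is a direct consequence of the power mean inequality applied to $y_1,\ldots,y_n$ with exponents $1$ and $m-1$.

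There is no real obstacle here; the only item that needs a passing remark is the boundary case $m=2$, where $n^{(m-2)/2}=1$ and the inequality collapses to the trivial identity $\|x\|=\|x\|$, so the argument above need only be invoked for $m\ge 3$.
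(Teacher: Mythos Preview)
Your proof is correct and follows precisely the route the paper indicates: the paper's proof consists of the single sentence ``The desired result can be proved by the well-known H\"{o}lder inequality,'' and your argument is exactly the natural way to carry this out (square both sides, substitute $y_i=x_i^2$, and apply H\"{o}lder with exponents $m-1$ and $(m-1)/(m-2)$, treating $m=2$ separately as the trivial case).
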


\begin{proof}
The desired result can be proved by the well-known H\"{o}lder inequality.
\qed\end{proof}

We now turn to our first existence theorem, which shows that, in case where $p>q\geq 2$ and $p$ is even,  the system of TAVEs \eqref{TAVEs} has a nonempty and compact solutions set if ${\mathcal A}$ is an ${\rm H}^+$-tensor.

\begin{theorem}\label{Exists}
Let ${\mathcal A}\in {\mathbb T}_{p,n}$ and ${\mathcal B}\in {\mathbb T}_{q,n}$. Suppose that $p$ is an even number satisfying $p>q\geq 2$ and ${\mathcal A}$ is an ${\rm H}^+$-tensor. Then  the solution set of \eqref{TAVEs},
 denoted by ${\rm SOL}(\mathcal{A},\mathcal{B},b)$, is a nonempty compact set for any $b\in \mathbb{R}^n$.
\end{theorem}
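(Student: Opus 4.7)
The plan is to invoke Theorem~\ref{th2.6} with a carefully chosen comparison map $G$, and then use the ${\rm H}^+$ hypothesis on $\mathcal{A}$ to rule out the existence of an exceptional family. For the auxiliary map I would take $G(x)=x^{[p-1]}$. Because $p$ is even, $p-1$ is odd, so $G$ is a homeomorphism of $\mathbb{R}^n$ with unique zero at the origin; by standard degree theory ${\rm deg}(G,B_r,{\bm 0})=1$ for every $r>0$. Thus $G$ meets the hypotheses of Theorem~\ref{th2.6} applied to $F(x)=\mathcal{A}x^{p-1}+\mathcal{B}|x|^{q-1}-b$.

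By Theorem~\ref{th2.6}, either $F(x)={\bm 0}$ has a solution (the desired conclusion) or there is an exceptional family $\{x^r\}$ with $\|x^r\|\to\infty$ and positive scalars $\mu_r$ such that
\[
\mathcal{A}(x^r)^{p-1}+\mathcal{B}|x^r|^{q-1}-b+\mu_r(x^r)^{[p-1]}={\bm 0}.
\]
Rewriting, $(\mathcal{A}+\mu_r\mathcal{I})(x^r)^{p-1}=b-\mathcal{B}|x^r|^{q-1}$. The idea is to normalize by $\|x^r\|^{p-1}$: set $y^r=x^r/\|x^r\|$, so $\|y^r\|=1$, and obtain
\[
(\mathcal{A}+\mu_r\mathcal{I})(y^r)^{p-1}=\frac{b-\mathcal{B}|x^r|^{q-1}}{\|x^r\|^{p-1}}.
\]
Since $p>q\ge 2$ and (using Lemma~\ref{Lemma01} or direct estimation) $\|\mathcal{B}|x^r|^{q-1}\|=O(\|x^r\|^{q-1})$, the right-hand side tends to $\bm 0$. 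Passing to a subsequence so that $y^r\to y^*$ with $\|y^*\|=1$, I would then split on the behavior of $\mu_r$. If $\mu_r$ is bounded, pass to a further subsequence with $\mu_r\to\mu^*\ge 0$; taking the limit yields $(\mathcal{A}+\mu^*\mathcal{I})(y^*)^{p-1}={\bm 0}$, contradicting the ${\rm H}^+$ property of $\mathcal{A}$. If $\mu_r\to\infty$, divide the normalized identity by $\mu_r$ first and take limits to get $(y^*)^{[p-1]}={\bm 0}$, so $y^*={\bm 0}$, contradicting $\|y^*\|=1$. Either way no exceptional family exists, so ${\rm SOL}(\mathcal{A},\mathcal{B},b)\neq\emptyset$.

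For compactness, closedness is immediate from continuity of $F$. Boundedness is proved by essentially the same scaling device: if there were solutions $x^k\in{\rm SOL}(\mathcal{A},\mathcal{B},b)$ with $\|x^k\|\to\infty$, then dividing $\mathcal{A}(x^k)^{p-1}+\mathcal{B}|x^k|^{q-1}=b$ by $\|x^k\|^{p-1}$ and passing to a limit point $y^*$ of $x^k/\|x^k\|$ (with $\|y^*\|=1$) gives $\mathcal{A}(y^*)^{p-1}={\bm 0}$, which contradicts the ${\rm H}^+$ hypothesis applied with $t=0$.

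The main obstacle is the analysis of the exceptional family, specifically the $\mu_r\to\infty$ subcase: one must make sure the normalization is done at the right scale so that both the $\mathcal{B}|x|^{q-1}$ term vanishes in the limit (this is where $p>q$ is crucial) and either $\mathcal{A}+\mu_r\mathcal{I}$ or $\mathcal{I}$ survives in the limit to produce a genuine contradiction with Definition~\ref{ERdef}. Once this step is set up properly, the compactness argument is essentially a corollary of the same estimate.
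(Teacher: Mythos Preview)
Your proof is correct and follows essentially the same line as the paper: same auxiliary map $G(x)=x^{[p-1]}$, same invocation of Theorem~\ref{th2.6}, same normalization by $\|x^r\|^{p-1}$, and the same scaling argument for compactness. The one notable technical difference is in handling $\mu_r$: you split into the cases $\mu_r$ bounded versus $\mu_r\to\infty$, whereas the paper avoids the second case entirely by using Lemma~\ref{Lemma01} to bound $\|(\bar x^r)^{[p-1]}\|$ from below by $n^{-(p-2)/2}$, which together with the normalized equation immediately forces $\{\mu_r\}$ to be bounded. Your case split works perfectly well, but the paper's route shows that the ``main obstacle'' you flagged never actually arises. (Your degree computation via the homeomorphism property of $G$ is, if anything, cleaner than the paper's appeal to Sard's lemma.)
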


\begin{proof}
We first prove that the equation \eqref{TAVEs} always has a solution for any $b\in \mathbb{R}^n$. Letting $G(x)=x^{[p-1]}$, it is easy to see that $G(x)={\bm 0}$ has only one zero solution. Moreover, since ${\bm 0}$ is a critical point of $G$, that is, the determinant of the Jacobian matrix of $G$ at ${\bm 0}$ is zero (i.e., ${\rm det}({\mathscr D}G({\bm 0}))=0$), it follows from Sard's Lemma (see \cite[p. 9]{FFG95}) and Definition 1.9 in \cite[p. 14]{FFG95} that ${\rm deg}(G,B_r,{\bm 0})\neq 0$ for any $r>0$. Suppose that the equation $F(x)={\bm 0}$ does not have solutions, where $F(x)$ is given by \eqref{F(x)}. Then by Theorem \ref{th2.6}, we know that there exists an exceptional family of elements $\{x^r\}_{r>0}$  of $F$ with respect to $G$, i.e., $\{x^r\}_{r>0}$ satisfies $\|x^r\|\rightarrow \infty$ as $r \rightarrow \infty $, and for each real number $r>0$ there exists a $\mu_r>0$ such that
$$\mathcal{A}(x^r)^{p-1}+\mathcal{B}|x^r|^{q-1}-b = -\mu_r(x^r)^{[p-1]},$$
which implies
\begin{equation}\label{TTr}
\mathcal{A}(\bar x^r)^{p-1}+\frac{1}{\|x^r\|^{p-q}}\mathcal{B}|\bar x^r|^{q-1}-\frac{b}{\|x^r\|^{p-1}}= -\mu_r(\bar x^r)^{[p-1]},
\end{equation}
where $\bar x^r=x^r/\|x^r\|$  for any $r$. Since $\|\bar x^r\|=1$ for any $r$, by Lemma \ref{Lemma01}, we know that $n^{-\frac{p-2}{2}}\leq \|(\bar x^r)^{[p-1]}\|$ for any $r$. Consequently, by (\ref{TTr}), it holds that
$$
n^{-\frac{p-2}{2}}\mu_r\leq \left\|\mathcal{A}(\bar x^r)^{p-1}+\frac{1}{\|x^r\|^{p-q}}\mathcal{B}|\bar x^r|^{q-1}-\frac{b}{\|x^r\|^{p-1}}\right\|.
$$
Hence, since $\|x^r\|\rightarrow \infty$ as $r \rightarrow \infty $ and $\|\bar x^r\|=1$ for any $r$, we claim that $\{\mu_r\}_{r>0}$ is bounded. Without loss of generality, we assume that $\bar x^r\rightarrow \bar{x}$ and $\mu_r\rightarrow \bar t$ as $r \rightarrow \infty$. From (\ref{TTr}), by taking $r \rightarrow \infty$, there exists $(\bar{ x},\bar t)\in(\mathbb{R}^n\backslash\{
 {\bm 0}\})\times\mathbb{R}_+$ such that
$$(\mathcal{A} + \bar t\mathcal{I})\bar{x}^{p-1} = {\bm 0},$$
which contradicts to the given condition that $\mathcal{A}$ is an ${\rm H}^+$-tensor.

Hereafter, we prove the compactness of the solution set ${\rm SOL}(\mathcal{A},\mathcal{B},b)$. It is obvious that ${\rm SOL}(\mathcal{A},\mathcal{B},b)$ is closed. We now prove that ${\rm SOL}(\mathcal{A},\mathcal{B},b)$ is bounded for any $b\in \mathbb{R}^n$. Suppose that ${\rm SOL}(\mathcal{A},\mathcal{B},b)$ is unbounded for some $\bar b\in \mathbb{R}^n$, then there exists a sequence $\{x^r\}_{r=1}^\infty$ satisfying $\|x^r\|\rightarrow\infty$ as $r\rightarrow\infty$, such that
$
\mathcal{A}(x^r)^{p-1}+\mathcal{B}|x^r|^{q-1}=\bar b$, which implies
\begin{equation}\label{Bound}
\mathcal{A}(\bar x^r)^{p-1}+\frac{1}{\|x^r\|^{p-q}}\mathcal{B}|\bar x^r|^{q-1}=\frac{\bar b}{\|x^r\|^{p-1}}
\end{equation}
where $\bar x^r=x^r/\|x^r\|$. Without loss of generality, we assume that $\bar x^r\rightarrow \bar{x}$ as $r \rightarrow \infty$. It is clear that $\bar x\neq{\bm 0}$. Consequently, by letting $r\rightarrow \infty$ in (\ref{Bound}), we know $\mathcal{A}\bar x^{p-1}={\bm 0}$, which means that there exists $(\bar x,0)\in (\mathbb{R}^n\backslash\{{\bm 0}\})\times \mathbb{R}_+$ such that \eqref{equation1.2} holds. It is a contradiction. We complete the proof.
\qed\end{proof}

From Theorem \ref{Exists} and Proposition \ref{P-WR}, we immediately obtain the following corollary.
\begin{corollary}\label{corollary1}
Let $\mathcal{A}\in \mathbb{T}_{p,n}$. If $\mathcal{A}$ is a $P$-tensor, then for any $\mathcal{B}\in \mathbb{T}_{q,n}$ with $2\leq q<p$, the system of TAVEs \eqref{TAVEs} has at least one solution.
\end{corollary}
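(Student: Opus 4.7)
The plan is to derive the corollary directly from the preceding Theorem~\ref{Exists} together with Proposition~\ref{P-WR}, filling in the one missing hypothesis of Theorem~\ref{Exists}, namely that $p$ must be even. Everything else is immediate.

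First, I would note that the corollary's hypothesis $\mathcal{A}\in\mathbb{T}_{p,n}$ being a P-tensor forces $p$ to be even. This is recorded in the discussion after Example~\ref{exam11-1} (citing \cite{YY14}): a P-tensor of odd order cannot exist, since $x$ and $-x$ would produce opposite signs in $\max_i x_i(\mathcal{A}x^{p-1})_i$ and violate the defining inequality. I would invoke this fact in a single sentence.

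Second, Proposition~\ref{P-WR} gives that any P-tensor is an ${\rm H}^+$-tensor. Thus from the assumption that $\mathcal{A}$ is a P-tensor we obtain simultaneously that $p$ is even and that $\mathcal{A}$ is an ${\rm H}^+$-tensor.

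Third, the hypothesis $2\leq q<p$ matches the condition $p>q\geq 2$ of Theorem~\ref{Exists}. All assumptions of Theorem~\ref{Exists} are therefore satisfied for the given $\mathcal{A}$ and any $\mathcal{B}\in\mathbb{T}_{q,n}$, so ${\rm SOL}(\mathcal{A},\mathcal{B},b)$ is nonempty (in fact also compact) for every $b\in\mathbb{R}^n$, which in particular yields at least one solution to \eqref{TAVEs}. There is no substantive obstacle here; the only thing to be careful about is explicitly citing the parity fact for P-tensors, because without it Theorem~\ref{Exists} cannot be invoked.
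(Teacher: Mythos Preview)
Your proposal is correct and follows the same route as the paper, which simply states that the corollary follows immediately from Theorem~\ref{Exists} and Proposition~\ref{P-WR}. Your explicit mention of the parity fact (that a P-tensor forces $p$ to be even, as noted after Example~\ref{exam11-1}) is a useful detail the paper leaves implicit but which is indeed needed to invoke Theorem~\ref{Exists}.
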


After the discussions on the case $p>q$, ones may be further concerned with the case $p=q$. Below, we give an answer to the solutions existence for such a case $p=q$. We further make the following assumption on the underlying tensors $\mathcal{A}$ and $\mathcal{B}$.

\begin{assum}\label{Assum00}
Let ${\mathcal A},{\mathcal B}\in {\mathbb T}_{p,n}$. Suppose that $(\mathcal{A}+t\mathcal{I})x^{p-1}+\mathcal{B}|x|^{p-1}={\bm 0}$ has no solution for  $(x,t)\in (\mathbb{R}^n\backslash \{{\bm 0}\})\times \mathbb{R}_+$.
\end{assum}

Notice that the set of tensors pair $({\mathcal A},{\mathcal B})$ satisfying Assumption \ref{Assum00} is nonempty, which can be shown by the following example.
\begin{exam}\label{exam11-11}
Let ${\mathcal A}=(a_{i_1i_2i_3i_4})\in\mathbb{T}_{4,2}$ with $a_{2111}=1$, $a_{1222}=-2$ and all other $a_{i_1i_2i_3i_4}=0$. Let ${\mathcal B}=(b_{i_1i_2i_3i_4})\in\mathbb{T}_{4,2}$ with $b_{1111}=-1$, $b_{2222}=1$ and all other $b_{i_1i_2i_3i_4}=0$.

We claim that $(\mathcal{A}+t\mathcal{I})x^{3}+\mathcal{B}|x|^{3}={\bm 0}$ has no solution $(x,t)\in (\mathbb{R}^2\backslash \{{\bm 0}\})\times \mathbb{R}_+$. In fact, suppose there exists $(\bar x,\bar t)\in (\mathbb{R}^2\backslash \{{\bm 0}\})\times \mathbb{R}_+$ such that $(\mathcal{A}+t\mathcal{I})x^{3}+\mathcal{B}|x|^{3}={\bm 0}$, then
\begin{equation}\label{TTde}
\left\{
\begin{array}{l}
(\bar t-\bar \delta_1)\bar x_1^3-2\bar x_2^3=0\\
\bar x_1^3+(\bar t+\bar \delta_2)\bar x_2^3=0,
\end{array}
\right.
\end{equation}
where $\bar \delta_1={\rm sign }(\bar x_1)$ and $\bar \delta_2={\rm sign }(\bar x_2)$ in a componentwise sense, and
$${\rm sign}(\tau)=\left\{\begin{array}{rl}
1, &\quad \tau>0,\\
0, & \quad \tau =0,\\
-1, & \quad \tau<0.
\end{array}\right.$$
Hence, it follows from the definition of ${\rm sign}(\tau)$ that
$$\Delta:=(\bar \delta_2-\bar \delta_1)^2-4(2-\bar \delta_1\bar \delta_2)\leq 2(\bar \delta_1\bar \delta_2-3)<0,$$
which implies the determinant of the matrix of coefficients of \eqref{TTde} must satisfy
$$
\left|
\begin{array}{cc}
\bar t-\bar \delta_1&-2\\
1&\bar t+\bar \delta_2
\end{array}
\right|=\bar t^2+(\bar \delta_2-\bar \delta_1)\bar t-\bar \delta_1\bar \delta_2+2>0.
$$
As a consequence, the solution of \eqref{TTde} is $\bar x_1^3=\bar x_2^3=0$ leading to a contradiction.
\end{exam}

\begin{theorem}\label{Exists2}
Let ${\mathcal A},{\mathcal B}\in {\mathbb T}_{p,n}$ with $p\geq 3$ being an even number. Suppose that $(\mathcal{A},{\mathcal B})$ satisfies Assumption \ref{Assum00}. Then the solution set ${\rm SOL}(\mathcal{A},\mathcal{B},b)$ of \eqref{TAVEs} is a nonempty compact set for any $b\in \mathbb{R}^n$.
\end{theorem}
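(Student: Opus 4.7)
The plan is to mimic the structure of the proof of Theorem \ref{Exists}, with the key modification that because $p=q$, the term $\mathcal{B}|x|^{p-1}$ no longer vanishes after rescaling by $\|x^r\|^{p-1}$, and this is precisely why the $\mathrm{H}^+$-tensor condition must be strengthened to Assumption \ref{Assum00}. I would first invoke Theorem \ref{th2.6} with the auxiliary function $G(x)=x^{[p-1]}$. Since $p$ is even, $G(x)={\bm 0}$ has only the trivial solution and, by Sard's Lemma together with the degree-theoretic characterization used in the proof of Theorem \ref{Exists}, $\mathrm{deg}(G,B_r,{\bm 0})\neq 0$ for every $r>0$. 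Setting $F(x)=\mathcal{A}x^{p-1}+\mathcal{B}|x|^{p-1}-b$, Theorem \ref{th2.6} then yields either a solution of $F(x)={\bm 0}$ (and we are done) or an exceptional family $\{x^r\}_{r>0}$ with respect to $G$.

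In the second (bad) case, I would argue exactly as for Theorem \ref{Exists}: there exist $\mu_r>0$ such that
\[
\mathcal{A}(x^r)^{p-1}+\mathcal{B}|x^r|^{p-1}-b=-\mu_r(x^r)^{[p-1]}.
\]
Dividing by $\|x^r\|^{p-1}$ and writing $\bar x^r=x^r/\|x^r\|$, and using the homogeneity $|x^r|^{p-1}=\|x^r\|^{p-1}|\bar x^r|^{p-1}$ (this is the point where $p=q$ really matters), I obtain
\[
\mathcal{A}(\bar x^r)^{p-1}+\mathcal{B}|\bar x^r|^{p-1}-\frac{b}{\|x^r\|^{p-1}}=-\mu_r(\bar x^r)^{[p-1]}.
\]
Lemma \ref{Lemma01} gives $\|(\bar x^r)^{[p-1]}\|\ge n^{-(p-2)/2}$, so taking norms on both sides and using $\|\bar x^r\|=1$ and $\|x^r\|\to\infty$ shows that $\{\mu_r\}$ is bounded. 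Passing to a subsequence along which $\bar x^r\to\bar x$ (with $\|\bar x\|=1$) and $\mu_r\to\bar t\geq 0$, the limit of the displayed identity reads
\[
(\mathcal{A}+\bar t\,\mathcal{I})\bar x^{p-1}+\mathcal{B}|\bar x|^{p-1}={\bm 0},\qquad (\bar x,\bar t)\in(\mathbb{R}^n\setminus\{{\bm 0}\})\times\mathbb{R}_+,
\]
which directly contradicts Assumption \ref{Assum00}. Therefore $\mathrm{SOL}(\mathcal{A},\mathcal{B},b)$ is nonempty.

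For compactness, closedness is immediate from continuity of $F$. For boundedness, I would argue by contradiction as in the proof of Theorem \ref{Exists}: if some solution sequence $\{x^r\}$ satisfies $\|x^r\|\to\infty$, then dividing $\mathcal{A}(x^r)^{p-1}+\mathcal{B}|x^r|^{p-1}=b$ by $\|x^r\|^{p-1}$ and passing to the limit along a convergent subsequence $\bar x^r\to\bar x\ne{\bm 0}$ produces
\[
\mathcal{A}\bar x^{p-1}+\mathcal{B}|\bar x|^{p-1}={\bm 0},
\]
which is the $t=0$ instance of the forbidden equation in Assumption \ref{Assum00}, again a contradiction.

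The main obstacle, compared with Theorem \ref{Exists}, is conceptual rather than technical: when $p>q$ the absolute-value term disappears in the rescaled limit and one can close the argument using only the $\mathrm{H}^+$-tensor property of $\mathcal{A}$, whereas here $\mathcal{B}|\bar x|^{p-1}$ survives, so the sole role of Assumption \ref{Assum00} is to play the part that being an $\mathrm{H}^+$-tensor played before; verifying that the scaling is consistent (both in the exceptional-family step and the boundedness step) and that Assumption \ref{Assum00} is applied with the correct range of $\bar t$ (including $\bar t=0$ for the compactness part) is the only subtle bookkeeping.
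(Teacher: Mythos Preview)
Your proposal is correct and follows precisely the approach the paper intends: the paper's own proof simply states that it ``can be proved by the similar way used in the proof of Theorem~\ref{Exists}'' and is omitted, and your write-up is exactly that analogue, with Assumption~\ref{Assum00} replacing the $\mathrm{H}^+$-tensor hypothesis at the two spots (the exceptional-family contradiction and the boundedness contradiction) where the rescaled $\mathcal{B}|\bar x^r|^{p-1}$ term survives in the limit because $p=q$.
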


\begin{proof}
It can be proved by the similar way used in the proof of Theorem \ref{Exists}. Here we skip the proof for brevity. \qed\end{proof}

To close this section, motivated by \cite{LLV18}, we state and prove the following theorem, in which a more checkable condition for the existence of solutions of \eqref{TAVEs} with $p=q$ is presented.
\begin{theorem}\label{Th3}
Let $\mathcal{A},\mathcal{B}\in \mathbb{T}_{p,n}$. Suppose that $p$ is even, and $\mathcal{A}$ has the unique order $2$ left inverse $M(\mathcal{A})^{-1}$. If $||M(\mathcal{A})^{-1}\cdot\mathcal{B}||_\infty < 1$, then \eqref{TAVEs} with $p=q$ has at least one solution for any $b\in \mathbb{R}^n$.
\end{theorem}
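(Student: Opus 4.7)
The plan is to invert the leading term via the left inverse $M(\mathcal{A})^{-1}$ and apply Brouwer's fixed-point theorem, exploiting both the contractive effect of the hypothesis $\|M(\mathcal{A})^{-1}\cdot\mathcal{B}\|_\infty<1$ and the fact that, since $p$ is even, the scalar map $t\mapsto t^{p-1}$ is a bijection of $\mathbb{R}$. First, I would left-multiply \eqref{TAVEs} (with $q=p$) by the matrix $M(\mathcal{A})^{-1}$. Using the identifications $\mathcal{A}x^{p-1}=\mathcal{A}\cdot x$ and $\mathcal{B}|x|^{p-1}=\mathcal{B}\cdot|x|$ from Remark~\ref{remarkaB}, the associativity and (left) matrix-distributivity collected in Remark~\ref{remark THd}, and the defining identity $M(\mathcal{A})^{-1}\cdot\mathcal{A}=\mathcal{I}$ from Definition~\ref{Inverse} together with the obvious $\mathcal{I}x^{p-1}=x^{[p-1]}$, the system \eqref{TAVEs} is converted into the equivalent reduced system
\[
x^{[p-1]}+\mathcal{C}|x|^{p-1}=c,
\]
where $\mathcal{C}:=M(\mathcal{A})^{-1}\cdot\mathcal{B}\in\mathbb{T}_{p,n}$ satisfies $\|\mathcal{C}\|_\infty<1$ and $c:=M(\mathcal{A})^{-1}b$.

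Since $p-1$ is odd, for every $w\in\mathbb{R}^n$ there is a unique $y\in\mathbb{R}^n$ with $y^{[p-1]}=w$ (the componentwise signed $(p-1)$-th root), and the map $w\mapsto y$ is continuous. Denoting it by $\Phi$ and setting $T(x):=\Phi\bigl(c-\mathcal{C}|x|^{p-1}\bigr)$, one sees that $x$ solves the reduced system if and only if $x=T(x)$. The elementary inequality $\|\mathcal{C}y^{p-1}\|_\infty\leq\|\mathcal{C}\|_\infty\|y\|_\infty^{p-1}$, immediate from the definition of $\|\cdot\|_\infty$ on $\mathbb{T}_{p,n}$, applied with $y=|x|$ gives
\[
\|T(x)\|_\infty^{p-1}\leq\|c\|_\infty+\|\mathcal{C}\|_\infty\|x\|_\infty^{p-1}.
\]
Choosing any $r>0$ with $r^{p-1}(1-\|\mathcal{C}\|_\infty)\geq\|c\|_\infty$, which is possible because $\|\mathcal{C}\|_\infty<1$, makes $T$ carry the compact convex set $\{x\in\mathbb{R}^n:\|x\|_\infty\leq r\}$ into itself. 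Brouwer's fixed-point theorem then delivers a fixed point of $T$, which is the desired solution of \eqref{TAVEs}.

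The main obstacle lies in the algebraic bookkeeping of the first step: one must carefully track the tensor orders to confirm that left multiplication by the matrix $M(\mathcal{A})^{-1}$ genuinely distributes across the sum $\mathcal{A}x^{p-1}+\mathcal{B}|x|^{p-1}$ and interacts with the vector arguments $x$ and $|x|$ as $(M(\mathcal{A})^{-1}\cdot\mathcal{A})\cdot x$ and $(M(\mathcal{A})^{-1}\cdot\mathcal{B})\cdot|x|$. Once this reduction is in place, the parity of $p$ makes the inversion of the leading term trivial, the required a priori bound falls out of the very definition of the tensor $\infty$-norm, and no tool beyond classical Brouwer is needed.
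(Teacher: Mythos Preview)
Your proposal is correct and follows essentially the same route as the paper: reduce \eqref{TAVEs} via the left inverse $M(\mathcal{A})^{-1}$ to the system $x^{[p-1]}=c-\mathcal{C}|x|^{p-1}$, define the corresponding map by taking componentwise $(p-1)$-th roots (well defined since $p$ is even), show it maps an $\infty$-norm ball of radius $\tau$ with $\tau^{p-1}(1-\|\mathcal{C}\|_\infty)\geq\|c\|_\infty$ into itself, and invoke Brouwer. The only cosmetic difference is that the paper verifies the fixed point solves \eqref{TAVEs} by multiplying back by $M(\mathcal{A})$ at the end, whereas you argue the equivalence up front; both are fine since $M(\mathcal{A})$ is invertible.
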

\begin{proof}
When $b = {\bm 0}$, it is clear that \eqref{TAVEs} has a zero solution. Now we assume that $b \neq {\bm 0}$. Let
$\mathcal{G} =(g_{i_1\ldots i_p})_{1\leq i_1,\ldots,i_p\leq n}= M(\mathcal{A})^{-1}\cdot\mathcal{B}$ and $h= (h_i)_{1\leq i\leq n} =M(\mathcal{A})^{-1}b$.
By the given condition, we have $||\mathcal{G}||_\infty < 1$.
Taking a parameter $\tau$ with $\tau^{p-1}\geq\frac{||h||_\infty}{1-||\mathcal{G}||_\infty}$, it is obvious that $\tau>0$. Set
$$\Omega = \left\{x=(x_1,x_2,\ldots,x_n)^\top\in \mathbb{R}^n:|x_i|\leq\tau\right\}$$
and
$$f(x) = \left(M(\mathcal{A})^{-1}b-M(\mathcal{A})^{-1}\cdot\mathcal{B}\cdot|x|\right)^{\left[\frac{1}{p-1}\right]}.$$
It is obvious that $\Omega$ is a closed convex set in $\mathbb{R}^n$ and $f$ is continuous. It then follows from the definition of $f$ that
\begin{align*}
|f(x)_i|
& = \left|\left((M(\mathcal{A})^{-1}b)_i-(M(\mathcal{A})^{-1}\cdot\mathcal{B}\cdot|x| )_i\right)^{\frac{1}{p-1}}\right|\\
& =  \left|h_i-\displaystyle\sum_{i_2,\ldots,i_p=1}^n g_{ii_2\ldots i_p}|x_{i_2}|\cdots|x_{i_p}|\right|^{\frac{1}{p-1}}\\
& \leq\left(\|h\|_\infty+\|\mathcal{G}\|_\infty\tau^{p-1}\right)^{\frac{1}{p-1}}\\
&\leq  \tau,
\end{align*}
which shows that $f$ is a map from the set $\Omega$ to itself.  By Brouwer's Fixed Point Theorem (see \cite[p. 125]{Rhe98} or \cite[p. 377]{AH65}), there exists a vector $\bar x\in\Omega$ such that $f(\bar x) = \bar x$, that is,
$$\left(M(\mathcal{A})^{-1}b-M(\mathcal{A})^{-1}\cdot\mathcal{B}\cdot|\bar x|\right)^{\left[\frac{1}{p-1}\right]} = \bar x.$$
Consequently, we have
\begin{equation}\label{equ}
M(\mathcal{A})^{-1}b-M(\mathcal{A})^{-1}\cdot\mathcal{B}\cdot|\bar x|= {\bar x}^{[p-1]}=\mathcal{I}\cdot \bar x,
\end{equation}
where $\mathcal{I}$ is the unit  tensor in $\mathbb{T}_{p,n}$. By Remarks \ref{remarkaB} and \ref{remark THd}, we have that $M(\mathcal{A})\cdot M(\mathcal{A})^{-1}b=b$ and $M(\mathcal{A})\cdot M(\mathcal{A})^{-1}\cdot \mathcal{B}\cdot|\bar x|=\mathcal{B}|\bar x|^{p-1}$. Hence, multiplying both sides of equation (\ref{equ}) by $M(\mathcal{A})$ leads to
$$b-\mathcal{B}\cdot|\bar x|= M(\mathcal{A})\cdot\mathcal{I}\cdot \bar x  = \mathcal{A}\cdot \bar x,$$
which implies $ \mathcal{A}{\bar x}^{p-1}+\mathcal{B}|\bar x|^{p-1} = b$. Therefore, \eqref{TAVEs} has at least one solution.
We complete the proof.\qed\end{proof}

\section{Bound of solutions}\label{Bounds}
In this section, we focus on studying the bound of solutions of \eqref{TAVEs} for the special case $p=q$. We begin with introducing the following concepts on tensors.

\begin{definition}\label{NonsingularDef}
 Let $\mathcal{A}\in \mathbb{T}_{p,n}$, and let $K$ be a given closed convex cone in $\mathbb{R}^n$. We say that $\mathcal{A}$ is $K$-singular, if $\mathcal{A}$ satisfies
 $$\{x\in K\backslash\{\bm 0\}~|~\mathcal{A}x^{p-1}={\bm 0}\}\neq \emptyset.$$
 Otherwise, we say that $\mathcal{A}$ is $K$-nonsingular. In particular, we say that $\mathcal{A}$ is singular, if $\mathcal{A}$ satisfies
 $$\{x\in \mathbb{R}^n\backslash\{\bm 0\}~|~\mathcal{A}x^{p-1}={\bm 0}\}\neq \emptyset.$$
 Otherwise, we say that $\mathcal{A}$ is nonsingular.
\end{definition}

\begin{lemma}\label{aapos}
Let $\mathcal{A}\in \mathbb{T}_{p,n}$. Suppose that $\mathcal{A}$ is nonsingular. Then $\lambda(\mathcal{A})>0$, where $\lambda(\mathcal{A})$ is the optimal value of the following problem
$$
\begin{array}{cl}
{\rm min}& \phi_{\mathcal{A}}(x):=\|\mathcal{A}x^{p-1}\|^2\\
{\rm s.t.}&\|x\|=1.
\end{array}$$
\end{lemma}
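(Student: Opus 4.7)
The plan is to invoke the classical Weierstrass extreme value theorem together with the definition of nonsingularity, exactly as one would for the analogous statement about matrices. The function $\phi_{\mathcal{A}}(x)=\|\mathcal{A}x^{p-1}\|^2$ is a polynomial in the entries of $x$, hence continuous on $\mathbb{R}^n$, and the feasible set $S=\{x\in\mathbb{R}^n:\|x\|=1\}$ is the unit sphere, which is closed and bounded and therefore compact. So a minimizer $x^\ast\in S$ exists, and the optimal value $\lambda(\mathcal{A})=\phi_{\mathcal{A}}(x^\ast)\ge 0$ is well-defined.

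The one remaining step is to rule out $\lambda(\mathcal{A})=0$. If equality held, then $\mathcal{A}(x^\ast)^{p-1}={\bm 0}$ while $\|x^\ast\|=1$, so in particular $x^\ast\in\mathbb{R}^n\setminus\{{\bm 0}\}$ with $\mathcal{A}(x^\ast)^{p-1}={\bm 0}$. This directly contradicts the assumption that $\mathcal{A}$ is nonsingular in the sense of Definition \ref{NonsingularDef} (taking $K=\mathbb{R}^n$ there). Therefore $\lambda(\mathcal{A})>0$, which is the desired conclusion.

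There is really no obstacle here: the argument is the standard compactness-plus-contrapositive pattern, and the only thing one has to be careful about is matching the definition of nonsingularity precisely (every nonzero $x$, not merely every unit vector, must fail to annihilate $\mathcal{A}$), but that presents no difficulty since any nonzero vector can be rescaled to lie on $S$ without changing whether $\mathcal{A}x^{p-1}$ vanishes (by the homogeneity $\mathcal{A}(\alpha x)^{p-1}=\alpha^{p-1}\mathcal{A}x^{p-1}$). This homogeneity observation is worth stating explicitly in the write-up, because it makes clear that restricting the optimization to the unit sphere loses no information about the set of zeros of $x\mapsto\mathcal{A}x^{p-1}$.
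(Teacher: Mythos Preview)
Your proof is correct and follows essentially the same approach as the paper: continuity of $\phi_{\mathcal{A}}$ on the compact unit sphere gives existence and nonnegativity of $\lambda(\mathcal{A})$, and a contradiction via the definition of nonsingularity rules out $\lambda(\mathcal{A})=0$. Your additional remark on homogeneity is not needed for the argument (the minimizer already lies on the sphere), but it does no harm.
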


\begin{proof}
For any given $\mathcal{A}\in \mathbb{T}_{p,n}$, the objective function $\phi_{\mathcal{A}}(x)$ is continuous on the compact set $\{x\in \mathbb{R}^n~:~\|x\|=1\}$ . It is obvious that the optimal value $\lambda(\mathcal{A})$ exists and is nonnegative at least.

Now, we turn to proving the fact $\lambda(\mathcal{A})>0$. Suppose that $\lambda(\mathcal{A})=0$, then exists an $\bar x\in \mathbb{R}^n$ with $\|\bar x\|=1$ such that $\|\mathcal{A}\bar x^{p-1}\|^2=0$, which implies $\mathcal{A}\bar x^{p-1}={\bm 0}$. It is a contradiction to the condition that $\mathcal{A}$ is nonsingular. Hence, we conclude that $\lambda({\mathcal A})>0$.
\qed\end{proof}

For any given $\mathcal{A}=(a_{i_1i_2\ldots i_p})\in \mathbb{T}_{p,n}$, denote the $2(p-1)$-th order $n$-dimensional square tensor $\mathcal{C}$ by
$$
c_{i_1\ldots i_{p-1}j_1\ldots j_{p-1}}=\sum_{i=1}^na_{ii_1\ldots i_{p-1}}a_{ij_1\ldots j_{p-1}}.
$$
It is clear that, when $p=2$ (i.e, $\mathcal{A}$ is an $n\times n$ matrix $A$), the tensor $\mathcal{C}$ defined above is exactly $A^\top A$. It is easy to see that $\phi_{\mathcal{A}}(x)=\mathcal{C}x^{2(p-1)}$ for any $x\in \mathbb{R}^n$. Moreover, if $\mathcal{A}$ is nonsingular, then $\mathcal{C}$ is a positive definite tensor. Moreover, by Theorem 5 in \cite{Qi05}, we know that $\mathcal{C}$ is positive definite, if and only if all of its $Z$-eigenvalues are positive. Indeed, the optimal value $\lambda(\mathcal{A})$ in Lemma \ref{aapos} is exactly the smallest $Z$-eigenvalue of the tensor $\mathcal{C}$.

\begin{proposition}\label{PropAA}
Let ${\mathcal A}=(a_{i_1\ldots i_{p}})_{1\leq i_1,\ldots,i_{p}\leq n}\in \mathbb{T}_{p,n}$ with $p\geq 3$. For any $x, \tilde{x}\in \mathbb{R}^n$ and $i,j\in [n]$, it holds that
$$
\left|\left({\mathcal A} x^{p-2}-{\mathcal A}\tilde{x}^{p-2}\right)_{ij}\right|\leq \|{\mathcal A}_{ij}\|_{\rm Frob}\|{x}-\tilde{x}\|\sum_{l=0}^{p-3}\|{x}\|^{p-l-3}\|\tilde{x}\|^{l},$$
where ${\mathcal A}_{ij}:=(a_{iji_3\ldots i_{p}})_{1\leq i_3,\ldots,i_{p}\leq n}\in \mathbb{T}_{p-2,n}$.
\end{proposition}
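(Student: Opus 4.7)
The plan is to reduce the bound to a sum of $p-2$ telescoping terms, each of which is controlled by Cauchy--Schwarz against $\|\mathcal{A}_{ij}\|_{\rm Frob}$.

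First, I would unfold the $(i,j)$-entry by the definition in \eqref{Axm-2}, writing
\[
({\mathcal A}x^{p-2}-{\mathcal A}\tilde{x}^{p-2})_{ij}=\sum_{i_3,\ldots,i_p=1}^n a_{iji_3\ldots i_p}\bigl(x_{i_3}\cdots x_{i_p}-\tilde{x}_{i_3}\cdots\tilde{x}_{i_p}\bigr).
\]
The key algebraic step is the standard telescoping identity for a difference of two products of $p-2$ scalars:
\[
x_{i_3}\cdots x_{i_p}-\tilde{x}_{i_3}\cdots\tilde{x}_{i_p}=\sum_{l=0}^{p-3}\Bigl(\prod_{k=3}^{p-1-l}x_{i_k}\Bigr)(x_{i_{p-l}}-\tilde{x}_{i_{p-l}})\Bigl(\prod_{k=p-l+1}^{p}\tilde{x}_{i_k}\Bigr),
\]
where each summand in the $l$-th position contains $p-3-l$ factors from $x$, one difference factor $x_{i_{p-l}}-\tilde{x}_{i_{p-l}}$, and $l$ factors from $\tilde{x}$. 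Inserting this into the previous display and applying the triangle inequality yields $p-2$ scalar sums to be bounded individually.

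Next I would apply the Cauchy--Schwarz inequality in $\mathbb{R}^{n^{p-2}}$ to each $l$-th term, viewing $(a_{iji_3\ldots i_p})$ as one vector and the product of coordinate terms as the other. This gives
\[
\Bigl|\sum_{i_3,\ldots,i_p}a_{iji_3\ldots i_p}\bigl(\prod_{k=3}^{p-1-l}x_{i_k}\bigr)(x_{i_{p-l}}-\tilde{x}_{i_{p-l}})\bigl(\prod_{k=p-l+1}^{p}\tilde{x}_{i_k}\bigr)\Bigr|\le \|{\mathcal A}_{ij}\|_{\rm Frob}\cdot R_l,
\]
where $R_l$ is the Euclidean norm of the coordinate-product tensor. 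Because the indices $i_3,\ldots,i_p$ are distinct summation variables, the sum defining $R_l^2$ factorizes across coordinates, and using $\sum_{i_k}x_{i_k}^2=\|x\|^2$, $\sum_{i_{p-l}}(x_{i_{p-l}}-\tilde{x}_{i_{p-l}})^2=\|x-\tilde{x}\|^2$, and $\sum_{i_k}\tilde{x}_{i_k}^2=\|\tilde{x}\|^2$, I obtain $R_l=\|x\|^{p-3-l}\,\|x-\tilde{x}\|\,\|\tilde{x}\|^{l}$. Summing over $l=0,\ldots,p-3$ then produces exactly the claimed bound.

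The argument is essentially mechanical once the telescoping is written down correctly; the only place requiring care is the bookkeeping in the telescoping decomposition, namely verifying that the $l$-th summand has precisely $p-3-l$ factors of $x$ and $l$ factors of $\tilde x$, so that the exponents in $\|x\|^{p-l-3}\|\tilde x\|^{l}$ match the statement. No heavier tools are needed, and the hypothesis $p\ge 3$ is used only to ensure that the telescoping sum is nonempty.
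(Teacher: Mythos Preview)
Your proposal is correct and follows essentially the same approach as the paper: the paper introduces the intermediate matrices $\mathcal{A}x^{p-2-l}\tilde{x}^{l}$ to express the telescoping $\mathcal{A}x^{p-2}-\mathcal{A}\tilde{x}^{p-2}=\sum_{l=0}^{p-3}\bigl(\mathcal{A}x^{p-2-l}\tilde{x}^{l}-\mathcal{A}x^{p-3-l}\tilde{x}^{l+1}\bigr)$ and then applies Cauchy--Schwarz to each summand, which is exactly your scalar-level telescoping and factorized norm argument written with a bit more notation.
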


\begin{proof}
For any $x, \tilde{x}\in \mathbb{R}^n$ and every $0\leq l\leq p-2$, denote the $n\times n$ matrix by ${\mathcal A}x^{p-2-l}\tilde{x}^{l}$, whose $ij$-th component is given by
$$
({\mathcal A}x^{p-2-l}\tilde{x}^{l})_{ij}=\sum_{i_3,\ldots,i_p=1}^na_{iji_3\ldots i_{p}}x_{i_3}\cdots x_{i_{p-l}}\tilde{x}_{i_{p-l+1}}\cdots \tilde{x}_{i_p}.
$$
It is easy to see that for every $0\leq l\leq p-3$,
\begin{align*}
&\left|\left({\mathcal A}x^{p-2-l}\tilde{x}^{l}-{\mathcal A}x^{p-3-l}\tilde{x}^{l+1}\right)_{ij}\right|\\
&\leq \sum_{i_3,\ldots,i_p=1}^n\left|a_{iji_3\ldots i_{p}}x_{i_3}\cdots x_{i_{p-l-1}}(x_{i_{p-l}}-\tilde{x}_{i_{p-l}})\tilde{x}_{i_{p-l+1}}\cdots \tilde{x}_{i_p}\right|\\
&\leq\|{\mathcal A}_{ij}\|_{\rm Frob}\|{x}-\tilde{x}\|\|{x}\|^{p-l-3}\|\tilde{x}\|^{l},
\end{align*}
where the second inequality follows from Cauchy-Schwartz inequality. Furthermore, since
$$
\left|\left({\mathcal A}{x}^{p-2}-{\mathcal A}{\tilde{x}}^{p-2}\right)_{ij}\right|\leq \sum_{l=0}^{p-3}\left|({\mathcal A}{ x}^{p-2-l}{\tilde{x}}^{l}-{\mathcal A}{x}^{p-3-l}{\tilde{x}}^{l+1})_{ij}\right|,
$$
it holds that
$$
\left|\left({\mathcal A}{x}^{p-2}-{\mathcal A}{\tilde{x}}^{p-2}\right)_{ij}\right|\leq \|{\mathcal A}_{ij}\|_{\rm Frob}\|{ x}-\tilde{x}\|\sum_{l=0}^{p-3}\|{x}\|^{p-3-l}\|\tilde{x}\|^l.
$$
We obtain the desired result and complete the proof.
\qed\end{proof}

By a similar way used in the proof of Proposition \ref{PropAA}, we can prove the following proposition.
\begin{proposition}\label{PropAB}
Let ${\mathcal A}=(a_{i_1\ldots i_{p}})_{1\leq i_1,\ldots,i_{p}\leq n}\in \mathbb{T}_{p,n}$. For any $x,\tilde{x}\in \mathbb{R}^n$ and $i\in [n]$, it holds that
$$
\left|\left({\mathcal A}x^{p-1}-{\mathcal A}\tilde{x}^{p-1}\right)_i\right|\leq \|{\mathcal A}_i\|_{\rm Frob}\|x-\tilde{x}\|\sum_{l=0}^{p-2}\|x\|^l\|\tilde{x}\|^{p-l-2}.$$
where ${\mathcal A}_{i}:=(a_{ii_2\ldots i_{p}})_{1\leq i_2,\ldots,i_{p}\leq n}\in \mathbb{T}_{p-1,n}$.
\end{proposition}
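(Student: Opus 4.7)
The plan is to mimic the telescoping/Cauchy--Schwartz decomposition carried out for Proposition \ref{PropAA}, but with vector-valued (rather than matrix-valued) intermediates. For each $0\leq l\leq p-1$ I would introduce the hybrid vector ${\mathcal A}x^{p-1-l}\tilde{x}^{l}\in\mathbb{R}^n$ whose $i$-th component is
$$
\bigl({\mathcal A}x^{p-1-l}\tilde{x}^{l}\bigr)_i=\sum_{i_2,\ldots,i_p=1}^{n}a_{i\,i_2\ldots i_p}\,x_{i_2}\cdots x_{i_{p-l}}\,\tilde{x}_{i_{p-l+1}}\cdots\tilde{x}_{i_p},
$$
so that the two endpoints $l=0$ and $l=p-1$ give exactly ${\mathcal A}x^{p-1}$ and ${\mathcal A}\tilde{x}^{p-1}$, respectively.

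Next I would write the telescoping identity
$$
{\mathcal A}x^{p-1}-{\mathcal A}\tilde{x}^{p-1}=\sum_{l=0}^{p-2}\bigl({\mathcal A}x^{p-1-l}\tilde{x}^{l}-{\mathcal A}x^{p-2-l}\tilde{x}^{l+1}\bigr),
$$
and estimate each consecutive difference on the $i$-th coordinate. Only one factor (namely the index $i_{p-l}$) changes between the two adjacent hybrids, producing a factor $x_{i_{p-l}}-\tilde{x}_{i_{p-l}}$ while the remaining $p-2$ slots split into $p-2-l$ copies of $x$ and $l$ copies of $\tilde{x}$. Applying Cauchy--Schwartz to the full sum over $(i_2,\ldots,i_p)$, viewing the entries of ${\mathcal A}_i$ as one factor and the product of coordinates as the other, then yields
$$
\left|\bigl({\mathcal A}x^{p-1-l}\tilde{x}^{l}-{\mathcal A}x^{p-2-l}\tilde{x}^{l+1}\bigr)_i\right|\leq \|{\mathcal A}_i\|_{\rm Frob}\,\|x-\tilde{x}\|\,\|x\|^{p-2-l}\|\tilde{x}\|^{l}.
$$

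Finally, combining this bound with the triangle inequality gives
$$
\left|\bigl({\mathcal A}x^{p-1}-{\mathcal A}\tilde{x}^{p-1}\bigr)_i\right|\leq \|{\mathcal A}_i\|_{\rm Frob}\,\|x-\tilde{x}\|\sum_{l=0}^{p-2}\|x\|^{p-2-l}\|\tilde{x}\|^{l},
$$
which after reindexing $l\mapsto p-2-l$ is precisely the claimed estimate. There is no real obstacle here: the only mild subtlety is being careful with the bookkeeping of how many $x$'s and $\tilde{x}$'s appear on each side of the telescoped difference, and with the direction of the reindexing so the final summation matches the stated form $\sum_{l=0}^{p-2}\|x\|^{l}\|\tilde{x}\|^{p-l-2}$. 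Since ${\mathcal A}_i$ is a tensor of order $p-1$, no additional smoothness or structural hypothesis on ${\mathcal A}$ is needed, and the result holds for arbitrary $x,\tilde{x}\in\mathbb{R}^n$.
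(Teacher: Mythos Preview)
Your proposal is correct and follows exactly the approach the paper intends: the paper omits the proof of Proposition \ref{PropAB}, stating only that it proceeds ``by a similar way used in the proof of Proposition \ref{PropAA},'' and your telescoping of hybrid tensor-vector products ${\mathcal A}x^{p-1-l}\tilde{x}^{l}$ combined with Cauchy--Schwartz is precisely that analogue. The bookkeeping and final reindexing are handled correctly.
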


Applying Proposition \ref{PropAB} to the case where $\tilde{x}={\bm 0}$, we immediately have
\begin{equation}\label{Axnorm}
\|{\mathcal A}x^{p-1}\|\leq \|\mathcal{A}\|_{\rm Frob}\|x\|^{p-1}, \quad \forall x\in \mathbb{R}^n.
\end{equation}

%For convenience, we denote
%\begin{equation}\label{H(x)}
%H(x)={\mathcal A}x^{p-1} + {\mathcal B}|x|^{q-1} -b.
%\end{equation}

\begin{theorem}\label{boundsolution}
Let $\mathcal{A}\in \mathbb{T}_{p,n}$. Suppose that $\mathcal{A}$ is nonsingular. Then for any $\mathcal{B}\in \mathbb{T}_{p,n}$ with satisfying $\|\mathcal{B}\|_{\rm Frob}<\sqrt{\lambda(\mathcal{A})}$, it holds that
$$
\|x\|\leq \frac{(\sigma+\|b\|)^{\frac{1}{p-1}}}{\lambda(\mathcal{A})^{\frac{1}{2(p-1)}}-\|\mathcal{B}\|_{\rm Frob}^{\frac{1}{p-1}}}
, \quad \forall x\in L_\sigma,$$
where $L_\sigma:=\{x\in \mathbb{R}^n~:~\|F(x)\|\leq \sigma\}$ and $F(x)$ is defined by \eqref{F(x)} with $p=q$.
\end{theorem}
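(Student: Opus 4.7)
The plan is to sandwich $\|\mathcal{A}x^{p-1}\|$ between a lower bound furnished by Lemma \ref{aapos} and an upper bound forced by the membership $x\in L_\sigma$, and then extract $\|x\|$ by taking $(p-1)$-th roots and exploiting the subadditivity of concave powers.

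First, I would rewrite $\|F(x)\|\le\sigma$ as $\|\mathcal{A}x^{p-1}+\mathcal{B}|x|^{p-1}-b\|\le\sigma$ and apply the triangle inequality together with the homogeneous estimate \eqref{Axnorm} applied to the tensor $\mathcal{B}$ at the point $|x|$; since $\||x|\|=\|x\|$, this produces
$$\|\mathcal{A}x^{p-1}\|\le \sigma+\|b\|+\|\mathcal{B}\|_{\rm Frob}\|x\|^{p-1}.$$
Next, Lemma \ref{aapos} gives $\lambda(\mathcal{A})>0$, and the $(p-1)$-homogeneity identity $\mathcal{A}x^{p-1}=\|x\|^{p-1}\mathcal{A}(x/\|x\|)^{p-1}$ extends the minimum on the unit sphere to all $x\neq{\bm 0}$, yielding $\|\mathcal{A}x^{p-1}\|\ge\sqrt{\lambda(\mathcal{A})}\,\|x\|^{p-1}$ (the $x={\bm 0}$ case is trivial). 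Combining the two estimates gives
$$\bigl(\sqrt{\lambda(\mathcal{A})}-\|\mathcal{B}\|_{\rm Frob}\bigr)\|x\|^{p-1}\le \sigma+\|b\|,$$
where the left-hand coefficient is strictly positive by the hypothesis $\|\mathcal{B}\|_{\rm Frob}<\sqrt{\lambda(\mathcal{A})}$.

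The only mildly delicate step, and the one that dictates the precise form of the bound, is reshaping this inequality into the expression stated in the theorem. The direct route yields $\|x\|\le (\sigma+\|b\|)^{1/(p-1)}/(\sqrt{\lambda(\mathcal{A})}-\|\mathcal{B}\|_{\rm Frob})^{1/(p-1)}$, which is in fact sharper than the asserted bound. To land exactly on the stated denominator, I would invoke subadditivity of $t\mapsto t^{1/(p-1)}$ on $[0,\infty)$: for any $0\le b\le a$, writing $a=(a-b)+b$ gives $a^{1/(p-1)}\le (a-b)^{1/(p-1)}+b^{1/(p-1)}$, so $(a-b)^{1/(p-1)}\ge a^{1/(p-1)}-b^{1/(p-1)}$. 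Applying this with $a=\sqrt{\lambda(\mathcal{A})}$ and $b=\|\mathcal{B}\|_{\rm Frob}$ turns $(\sqrt{\lambda(\mathcal{A})}-\|\mathcal{B}\|_{\rm Frob})^{1/(p-1)}$ into the lower bound $\lambda(\mathcal{A})^{1/(2(p-1))}-\|\mathcal{B}\|_{\rm Frob}^{1/(p-1)}$, and dividing yields the claimed inequality. The main conceptual ingredient is the homogeneity argument that converts the constrained extremum $\lambda(\mathcal{A})$ from Lemma \ref{aapos} into a global lower bound on $\|\mathcal{A}x^{p-1}\|$; the rest is triangle inequality plus an elementary concavity trick.
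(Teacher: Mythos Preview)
Your proposal is correct and follows essentially the same approach as the paper: both combine the lower bound $\|\mathcal{A}x^{p-1}\|\ge\sqrt{\lambda(\mathcal{A})}\,\|x\|^{p-1}$ from Lemma \ref{aapos}, the upper bound \eqref{Axnorm} applied to $\mathcal{B}|x|^{p-1}$, the triangle inequality, and the subadditivity of $t\mapsto t^{1/(p-1)}$. The only cosmetic difference is the order of operations: the paper takes $(p-1)$-th roots first (applying subadditivity to the sum on the right) and then rearranges, whereas you rearrange first, take roots, and then apply subadditivity to the denominator---as you correctly note, your intermediate bound is actually sharper before this last weakening step.
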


\begin{proof}
For any $x\in L_\sigma$, it holds that
\begin{align}\label{mmnorm}
\|F(x)\|&=\|\mathcal{A}x^{p-1}+\mathcal{B}x^{p-1}-b\| \nonumber\\
&\geq \|\mathcal{A}x^{p-1}\|-\|\mathcal{B}x^{p-1}\|-\|b\| \nonumber \\
&\geq  \sqrt{\lambda(\mathcal{A})}\|x\|^{p-1}-\|\mathcal{B}\|_{\rm Frob}\|x\|^{p-1}-\|b\|,
\end{align}
where the last inequality comes from Lemma \ref{aapos} and inequality (\ref{Axnorm}). By (\ref{mmnorm}), we obtain
 $$
 \sqrt{\lambda(\mathcal{A})}\|x\|^{p-1}\leq \|F(x)\|+\|b\|+\|\mathcal{B}\|_{\rm Frob}\|x\|^{p-1},
 $$
 which, together with an application of $({\bm u}+{\bm v})^{\frac{1}{p-1}}\leq {\bm u}^{\frac{1}{p-1}}+{\bm v}^{\frac{1}{p-1}} $ for any ${\bm u}, {\bm v}\in \mathbb{R}_+$, implies
\begin{equation}\label{jjmmnorm}
\lambda(\mathcal{A})^{\frac{1}{2(p-1)}}\|x\|\leq (\|F(x)\|+\|b\|)^{\frac{1}{p-1}}+\|\mathcal{B}\|_{\rm Frob}^{\frac{1}{p-1}}\|x\|,
\end{equation}
Hence, by the given condition that $\|F(x)\|\leq \sigma$, we obtain the desired result and complete the proof.
\qed\end{proof}

\begin{remark}\label{solutionRem} Let $\mathcal{A}, \mathcal{B}\in \mathbb{T}_{p,n}$. For any solution $x$ of the special case of \eqref{TAVEs} with $p=q$, i.e., $\mathcal{A}x^{p-1}+\mathcal{B}|x|^{p-1}=b$, it follows from \eqref{Axnorm} that
$$\|b\|\leq \|\mathcal{A}x^{p-1}\|+\|\mathcal{B}|x|^{p-1}\|\leq \|\mathcal{A}\|_{\rm Frob}\|x\|^{p-1}+\|\mathcal{B}\|_{\rm Frob}\||x|\|^{p-1},$$
which implies
$$
\|x\|\geq \left\{\frac{\|b\|}{\|\mathcal{A}\|_{\rm Frob}+\|\mathcal{B}\|_{\rm Frob}}\right\}^\frac{1}{p-1}.
$$
If $\mathcal{A}$ is nonsingular, and $\|\mathcal{B}\|_{\rm Frob}<\sqrt{\lambda(\mathcal{A})}$, then from Theorem \ref{boundsolution}, it holds that
$$
\|x\|\leq \frac{\|b\|^{\frac{1}{p-1}}}{\lambda(\mathcal{A})^{\frac{1}{2(p-1)}}-\|\mathcal{B}\|_{\rm Frob}^{\frac{1}{p-1}}}
$$
for any solution $x$ of \eqref{TAVEs}.
\end{remark}

%\newpage
\section{Algorithm and numerical results}\label{Alg}
In this section, we will employ the well-developed generalized Newton method to find a numerical solution of the system of TAVEs \eqref{TAVEs}. So, we first present the details of the generalized Newton method for solving TAVEs. Then, to show the numerical performance, we report some results by testing synthetic examples with random data.

\subsection{Algorithm}\label{Algorithm}
At the beginning of this section, we first list two lemmas, which open a door of applying the generalized Newton method to TAVEs \eqref{TAVEs}. Here, we refer the reader to \cite{M09} for the proofs.

\begin{lemma}\label{singularv} The singular values of the matrix $A\in \mathbb{R}^{n\times n}$ exceed $1$ if and only if the
minimum eigenvalue of $A^\top A$ exceeds $1$.
\end{lemma}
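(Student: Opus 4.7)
The plan is to derive the stated equivalence directly from the standard characterization of singular values of a square real matrix, namely that the singular values of $A$ coincide with the nonnegative square roots of the eigenvalues of $A^\top A$. Since $A^\top A$ is symmetric positive semidefinite, all its eigenvalues are nonnegative real numbers, so this square-root correspondence is well-defined and order-preserving.

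First I would make the bijection precise. Label the singular values $\sigma_1(A)\geq\sigma_2(A)\geq\cdots\geq\sigma_n(A)\geq 0$ and the eigenvalues of $A^\top A$ as $\lambda_1(A^\top A)\geq\cdots\geq\lambda_n(A^\top A)\geq 0$. Then $\sigma_i(A)=\sqrt{\lambda_i(A^\top A)}$ for every $i\in[n]$, a fact that is immediate from the singular value decomposition $A=U\Sigma V^\top$ which yields $A^\top A=V\Sigma^2 V^\top$. In particular, the smallest singular value $\sigma_n(A)$ satisfies $\sigma_n(A)=\sqrt{\lambda_n(A^\top A)}$, where $\lambda_n(A^\top A)$ is the minimum eigenvalue of $A^\top A$.

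Next I would finish the two directions of the equivalence. For the forward direction, if every singular value of $A$ exceeds $1$, then in particular $\sigma_n(A)>1$, so $\lambda_n(A^\top A)=\sigma_n(A)^2>1$, giving that the minimum eigenvalue of $A^\top A$ exceeds $1$. For the reverse direction, if the minimum eigenvalue $\lambda_n(A^\top A)$ exceeds $1$, then because $\lambda_i(A^\top A)\geq\lambda_n(A^\top A)>1$ for every $i$, taking square roots yields $\sigma_i(A)>1$ for every $i$, so all singular values of $A$ exceed $1$. Both directions hinge only on the monotonicity of the square root on $[0,\infty)$.

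The argument contains no genuine obstacle: the only thing one needs to take care of is using the correct, consistently ordered indexing so that $\sigma_n(A)^2=\lambda_n(A^\top A)$ matches the smallest eigenvalue with the smallest singular value. Since the lemma is attributed to \cite{M09} in the excerpt, I would keep the exposition short and simply cite the SVD identity rather than re-deriving it.
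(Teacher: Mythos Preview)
Your argument is correct and is the standard derivation via the singular value decomposition identity $\sigma_i(A)=\sqrt{\lambda_i(A^\top A)}$. The paper itself does not supply a proof of this lemma at all; it simply refers the reader to \cite{M09}, so your short SVD-based justification is entirely appropriate and matches what that reference does.
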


\begin{lemma}\label{singularA} If the singular values of $A\in \mathbb{R}^{n\times n}$ exceed $1$ then $A+D$ is invertible for
any diagonal matrix $D$ whose diagonal elements equal $\pm 1$ or $0$.
\end{lemma}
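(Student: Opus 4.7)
The plan is to proceed by contradiction, reducing invertibility of $A+D$ to a norm comparison that directly uses the singular value hypothesis and the $\pm 1, 0$ structure of $D$. Specifically, I would suppose that $A+D$ is singular, so that there exists a nonzero vector $x \in \mathbb{R}^n$ with $(A+D)x = {\bm 0}$, or equivalently $Ax = -Dx$. Taking Euclidean norms of both sides and squaring gives the identity $\|Ax\|^2 = \|Dx\|^2$, and the strategy is to show that this identity is incompatible with the two hypotheses: the lower bound on $\|Ax\|^2$ forced by the singular values of $A$, and the upper bound on $\|Dx\|^2$ forced by the structure of $D$.

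For the lower bound, I would invoke Lemma \ref{singularv}: the assumption that every singular value of $A$ exceeds $1$ translates to $\lambda_{\min}(A^\top A) > 1$. Combined with the Rayleigh quotient inequality $x^\top A^\top A x \geq \lambda_{\min}(A^\top A)\|x\|^2$, this yields $\|Ax\|^2 > \|x\|^2$ for every nonzero $x$. For the upper bound, I would simply exploit that $D = \mathrm{diag}(d_1,\ldots,d_n)$ with $d_i \in \{-1,0,1\}$, so $d_i^2 \leq 1$ and hence
\[
\|Dx\|^2 = \sum_{i=1}^n d_i^2 x_i^2 \leq \sum_{i=1}^n x_i^2 = \|x\|^2.
\]
Chaining the two estimates through the identity $\|Ax\|^2 = \|Dx\|^2$ gives $\|x\|^2 < \|Ax\|^2 = \|Dx\|^2 \leq \|x\|^2$, which is impossible. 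This contradiction forces $A+D$ to be invertible, completing the argument.

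There is no real obstacle in this proof; the result is essentially an exercise in combining the variational characterization of the smallest singular value with the evident contraction property of a diagonal sign matrix. The only mild subtlety is making sure one uses the \emph{strict} inequality $\lambda_{\min}(A^\top A) > 1$ (rather than $\geq 1$), since otherwise the two bounds could be compatible at equality and the contradiction would fail; fortunately, the hypothesis that all singular values \emph{exceed} $1$ gives strictness for free via Lemma \ref{singularv}.
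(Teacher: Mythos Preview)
Your argument is correct. Note, however, that the paper does not supply its own proof of this lemma: it explicitly refers the reader to \cite{M09} for the proofs of both Lemma~\ref{singularv} and Lemma~\ref{singularA}. Your contradiction argument---using $\|Ax\|^2 > \|x\|^2$ from the singular value hypothesis via Lemma~\ref{singularv}, together with $\|Dx\|^2 \le \|x\|^2$ from the $\pm 1, 0$ structure of $D$---is precisely the standard proof and matches the one given in Mangasarian's original paper \cite{M09}.
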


\begin{remark}\label{remarkAB}
Note that the definition \eqref{Axm-2} corresponds to a matrix. Then, we can apply Lemmas \ref{singularv} and \ref{singularA} to the problem under consideration. Specifically, if $\mathcal{B}|x|^{q-2}$ is invertible and the singular values of $(\mathcal{B}|x|^{q-2})^{-1}\mathcal{A}x^{p-2}$ exceed $1$, then by Lemma \ref{singularA}, we immediately know that  $\mathcal{A}x^{p-2}+\mathcal{B}|x|^{q-2}D(x)$ is invertible, where $D(x)={\rm diag}({\rm sign}(x))$ is a diagonal matrix whose diagonal elements are $\pm 1$ or $0$. It is a good news for the employment of the generalized Newton method for TAVEs.
\end{remark}

Below, we first use an example to show the conclusion in Remark \ref{remarkAB} that $\mathcal{A}x^{p-2}+\mathcal{B}|x|^{q-2}D(x)$ is invertible under some conditions.

\begin{exam}\label{exam11-11}
Let $\mathcal{B}=(b_{ijk})\in \mathbb{T}_{3,2}$ with $b_{111}=b_{222}=0$ and $b_{112}=b_{121}=b_{211}=b_{122}=b_{212}=b_{221}=1$. Then for any $x\in \mathbb{R}^2\backslash \{0\}$, we have
$$
\mathcal{B}x=\left[
\begin{array}{cc}
x_2&x_1+x_2\\
x_1+x_2&x_1
\end{array}
\right].
$$
Consequently, we have ${\rm det}(\mathcal{B}x)=-(x_1^2+x_1x_2+x_2^2)<0$ for any $x\in \mathbb{R}^2\backslash \{0\}$, which means that $\mathcal{B}x$ is invertible. Let $\mathcal{A}=(a_{ijk})\in \mathbb{T}_{3,2}$ with $a_{111}=a_{222}=0$ and $a_{112}=a_{121}=a_{211}=a_{122}=a_{212}=a_{221}=2$. Then it is easy to see that the singular values of $(\mathcal{B}x)^{-1}\mathcal{A}x$ exceed $1$, and $\mathcal{A}x+\mathcal{B}xD(x)$ is invertible.
\end{exam}\label{exam11-11}

Now, we present the generalized Newton method for TAVEs. Recalling the notation \eqref{F(x)}, we consider the case where ${\mathcal A}\in {\mathbb T}_{p,n}$ and ${\mathcal B}\in {\mathbb T}_{q,n}$ are semi-symmetric tensors. Denote
\begin{equation}\label{Vk}
V(x) = (p-1) {\mathcal A}x^{p-2} + (q-1){\mathcal B}|x|^{q-2}D(x),
\end{equation}
where $D(x)$ is given in Remark \ref{remarkAB}. Then, the matrix $V(x)$ defined by \eqref{Vk} can be viewed as a generalized Jacobian matrix of $F$ at $x$.
Then, it follows from \cite{QS99} that, for a given $x_k$, the generalized Newton method for \eqref{TAVEs} reads as follows:
\begin{equation}\label{GN}
x_{k+1} = x_k  - V(x_k)^{-1} F(x_k),
\end{equation}
where $V(x_k)$ stands for the generalized Jacobian matrix at $x_k$. By utilizing the notation $F(x_k)$ and $V(x_k)$ and the tensor-vector product \eqref{Axm-1},  the iterative scheme \eqref{GN} can be rewritten as
\begin{equation}\label{GNewton}
x_{k+1}=V(x_k)^{-1}\left[(p-2){\mathcal A}x_k^{p-1}+(q-2){\mathcal B}|x_k|^{q-1}+b\right].
\end{equation}

\begin{remark}
When we consider the case $p=q$ in TAVEs \eqref{TAVEs}, it can be easily seen that the iterative scheme \eqref{GNewton} immediately reduces to
\begin{align*}
x_{k+1}&=\frac{p-2}{p-1}\left[{\mathcal A}x_k^{p-2}+{\mathcal B}|x_k|^{p-2}D(x_k)\right]^{-1}\left[\left({\mathcal A}x_k^{p-2}+{\mathcal B}|x_k|^{p-2}D(x_k)\right)x_k+\frac{b}{p-2}\right] \\
&=\frac{p-2}{p-1}x^k + \frac{1}{p-1}\left[{\mathcal A}x_k^{p-2}+{\mathcal B}|x_k|^{p-2}D(x_k)\right]^{-1}b,
\end{align*}
where the first equality uses the fact that $D(x_k)x_k = |x_k|$. In particular, if we consider the special case without the absolute value term (i.e., $ {\mathcal B}|x|^{p-1}={\bf 0}$) in \eqref{TAVEs}, the above iterative scheme immediately recovers the Newton method introduced in \cite{LXX17}.
\end{remark}

\subsection{Numerical results}
We have proposed a generalized Newton method \eqref{GNewton} for the system of TAVEs \eqref{TAVEs} in Section \ref{Algorithm}. It is not difficult to see that the generalized Newton method enjoys a simple iterative scheme. In this subsection, we will show through experimentation with synthetic data that such a simple method is a highly probabilistic reliable TAVEs solver for the problem under consideration.

We write the code of the generalized Newton method in {\sc Matlab} 2014a and conduct the experiments on a DELL workstation computer equipped with Intel(R) Xeon(R) CPU E5-2680 v3 @2.5GHz and 128G RAM running on Windows 7 Home Premium operating system. Here, we employ the publicly shared {\sc Matlab} Tensor Toolbox \cite{TensorT} to compute tensor-vector products and symmetrization of tensors.

From an application perspective, we only established the solutions existence theorems for the case of TAVEs \eqref{TAVEs} with $p\geq q\geq 2$. However, the proposed generalized Newton method does not depend on the relation between $p$ and $q$, i.e., the algorithm is applicable to the cases $p\geq q$ and $p\leq q$. Therefore, we consider two cases of TAVEs with $p=q$ and $p\neq q$ (i.e., $p<q$ and $p>q$) in our experiments. Moreover, we investigate four scenarios on tensors ${\mathcal A}$ and ${\mathcal B}$: (i) both ${\mathcal A}$ and ${\mathcal B}$ are ${\mathcal M}$-tensors; (ii) ${\mathcal A}$ is an ${\mathcal M}$-tensor, ${\mathcal B}$  is a general random tensor; (iii) ${\mathcal A}$ is a general random tensor, ${\mathcal B}$ is an ${\mathcal M}$-tensor; (iv) both ${\mathcal A}$ and ${\mathcal B}$ are general random tensors. Here, to generate an ${\mathcal M}$-tensor ${\mathcal A}$ or ${\mathcal B}$, we follow the way used in \cite{DW16}. That is, we first generate a random tensor ${\mathcal C}=(c_{i_1 i_2\ldots i_m})$ and set
$$\zeta_{\mathcal C}=(1+\epsilon)\cdot \max_{1\leq i\leq n}\left(\sum_{i_2,\cdots,i_m=1}^nc_{ii_2\ldots i_m}\right),\quad \epsilon>0.$$
Then, we take ${\mathcal A}$ or ${\mathcal B}$ as $\zeta_{\mathcal C}{\mathcal I}-{\mathcal C}$. More concretely, for the above four scenarios: (i) We first generate ${\mathcal C}_1$ and ${\mathcal C}_2$ randomly so that all entries are uniformly distributed in $(0,1)$ and $(-1,1)$, respectively. Then, we take ${\mathcal A} = \zeta_{\mathcal C_1}{\mathcal I}-{\mathcal C}_1$ and ${\mathcal B} = \zeta_{\mathcal C_2}{\mathcal I}-{\mathcal C}_2$; (ii) Generate ${\mathcal C}$ whose entries are uniformly distributed in $(0,2)$ and take ${\mathcal A} = \zeta_{\mathcal C}{\mathcal I}-{\mathcal C}$. Tensor ${\mathcal B}$ is a general random one whose components are uniformly distributed in $(-1,0)$; (iii) ${\mathcal A}$ is a general random tensor whose components are uniformly distributed in $(-1,0)$. For tensor ${\mathcal B}$, we generate ${\mathcal C}$ such that all entries are uniformly distributed in $(-0.5,0.5)$ and take ${\mathcal B} = \zeta_{\mathcal C}{\mathcal I}-{\mathcal C}$; (iv) Both ${\mathcal A}$ and ${\mathcal B}$ are general tensors, whose entries are uniformly distributed in $(0,1)$ and $(-4,1)$, respectively. Throughout, we take $\epsilon=0.1$ for all ${\mathcal M}$-tensors, and all tensors ${\mathcal A}$ and ${\mathcal B}$ are symmetrized by the {\sc Matlab} tensor toolbox \cite{TensorT}. To keep the fact that each randomly generated problem has at least one solution, we construct $b$ by setting $b={\mathcal A}x_*^{p-1}+{\mathcal B}|x_*|^{q-1}$, where $x_*$ is a pregenerated vector whose entries are uniformly distributed in $(-1,1)$. Moreover, we always take $x_0=(1,1,\cdots,1)^\top$ as our initial point for the proposed method.

To investigate the numerical performance of the generalized Newton method, we report the number of iterations (Iter.), the computing time in seconds (Time), the absolute error (Err) at point $x_k$, which is defined by
$${\rm Err}:=\|{\mathcal A}x_k^{p-1} + {\mathcal B}|x_k|^{q-1}-b\|\leq {\rm Tol}.$$
Throughout, we set ${\rm Tol}=10^{-5}$. Since all the data is generated randomly, we test $100$ groups of random data for each scenario and report the minimum and maximum iterations ($k_{\min}$ and $k_{\max}$), the minimum and maximum computing time ($t_{\min}$ and $t_{\max}$), respectively. In practice, notice that we completely do not know the true solutions of the system of TAVEs \eqref{TAVEs}. Hence, we can not guarantee that the generalized Newton method starting with the constant initial point $x_0$ (which might be far away from the true solutions) is always convergent (or successful) for the random data. Accordingly, we report the {\it success rate} (SR) of $100$ random problems in the sense that the generalized Newton method can achieve the preset `Tol' in 2000 iterations.

\setlength\rotFPtop{0pt plus 1fil}
\begin{sidewaystable}
\begin{center}
\caption{Computational results for the cases $p\equiv q=m$ with (i) $({\mathcal A},{\mathcal B})$ are ${\mathcal M}$-tensors, and (ii) ${\mathcal A}$ is an ${\mathcal M}$-tensor and ${\mathcal B}$ is a general tensor.}\vskip 0.2mm
\label{table1}
%\tiny
\def\temptablewidth{1\textwidth}
\begin{tabular*}{\temptablewidth}{@{\extracolsep{\fill}}llll}\toprule
& (i) $({\mathcal A},{\mathcal B})$ are ${\mathcal M}$-tensors && (ii) ${\mathcal A}$ is an ${\mathcal M}$-tensor, ${\mathcal B}$ is a general tensor \\\cline{2-2} \cline{4-4}
$(m,n)$ & Iter. ($k_{\min}$ / $k_{\max}$) / Time ($t_{\min}$ / $t_{\max}$) / Err / SR && Iter. ($k_{\min}$ / $k_{\max}$) / Time ($t_{\min}$ / $t_{\max}$) / Err / SR \\ \midrule
$( 3, 5)$ & 5.16 (  3 / 103) / 0.02  (0.00 / 0.44)  / 1.39$\times 10^{-6}$ / 1.00 && 39.39 (  4 / 939) / 0.17  (0.02 / 3.96)  / 1.53$\times 10^{-6}$ / 1.00\\
$( 3,10)$ & 3.57 (  3 /   5) / 0.02  (0.00 / 0.03)  / 1.56$\times 10^{-6}$ / 1.00 && 70.24 (  6 / 1401) / 0.30  (0.03 / 6.01)  / 1.40$\times 10^{-6}$ / 1.00\\
 $( 3,20)$ & 3.91 (  3 /   5) / 0.02  (0.02 / 0.03)  / 8.89$\times 10^{-7}$ / 1.00 && 108.55 (  8 / 1825) / 0.47  (0.03 / 7.91)  / 1.30$\times 10^{-6}$ / 0.98\\ \midrule
$( 4, 5)$ & 21.83 (  4 / 652) / 0.10  (0.02 / 2.89)  / 1.31$\times 10^{-6}$ / 0.84 && 6.49 (  3 /  36) / 0.03  (0.02 / 0.17)  / 1.11$\times 10^{-6}$ / 0.99 \\
$( 4,10)$ & 11.67 (  4 / 203) / 0.05  (0.02 / 0.89)  / 1.04$\times 10^{-6}$ / 0.94 && 10.03 (  4 / 207) / 0.05  (0.02 / 0.92)  / 8.32$\times 10^{-7}$ / 1.00\\
 $( 4,20)$ & 5.08 (  4 /  18) / 0.02  (0.02 / 0.09)  / 1.13$\times 10^{-6}$ / 1.00 && 18.84 (  4 / 688) / 0.09  (0.02 / 3.21)  / 1.13$\times 10^{-6}$ / 1.00 \\ \midrule
$( 5, 5)$ & 11.46 (  3 /  57) / 0.05  (0.02 / 0.25)  / 1.43$\times 10^{-6}$ / 0.96 && 76.48 (  6 / 710) / 0.35  (0.03 / 3.23)  / 1.55$\times 10^{-6}$ / 1.00\\
 $( 5,10)$ & 10.87 (  3 / 217) / 0.05  (0.00 / 1.11)  / 1.09$\times 10^{-6}$ / 1.00 && 86.66 (  9 / 1925) / 0.41  (0.05 / 9.09)  / 1.83$\times 10^{-6}$ / 0.96 \\
 $( 5,20)$ & 14.10 (  4 / 597) / 0.27  (0.06 / 11.62)  / 1.29$\times 10^{-6}$ / 1.00 && 84.70 ( 10 / 1355) / 1.64  (0.19 / 26.29)  / 1.57$\times 10^{-6}$ / 0.99\\ \midrule
$( 6, 5)$ & 5.46 (  3 /   9) / 0.03  (0.02 / 0.05)  / 1.09$\times 10^{-6}$ / 1.00 && 15.65 (  3 / 146) / 0.08  (0.02 / 0.69)  / 1.25$\times 10^{-6}$ / 0.97 \\
$( 6,10)$ & 5.72 (  3 /  95) / 0.04  (0.02 / 0.67)  / 1.10$\times 10^{-6}$ / 1.00 && 15.66 (  4 / 165) / 0.11  (0.02 / 1.20)  / 1.45$\times 10^{-6}$ / 0.99\\
$( 6,15)$ & 4.97 (  3 /   6) / 0.30  (0.17 / 0.37)  / 1.28$\times 10^{-6}$ / 1.00 && 72.15 (  4 / 1811) / 4.60  (0.23 / 115.89)  / 1.72$\times 10^{-6}$ / 0.98 \\ \bottomrule
\end{tabular*}
\end{center}
\end{sidewaystable}

\setlength\rotFPtop{0pt plus 1fil}
\begin{sidewaystable}
\begin{center}
\caption{Computational results for the cases $p\equiv q=m$ with (iii) ${\mathcal A}$ is a general tensor and ${\mathcal B}$ is an ${\mathcal M}$-tensor, and (iv) $({\mathcal A},{\mathcal B})$ are general tensors}\vskip 0.2mm
\label{table2}
%\tiny
\def\temptablewidth{1\textwidth}
\begin{tabular*}{\temptablewidth}{@{\extracolsep{\fill}}llll}\toprule
& (iii) ${\mathcal A}$ is a general tensor, ${\mathcal B}$ is an ${\mathcal M}$-tensor  && (iv) $({\mathcal A},{\mathcal B})$ are general tensors\\\cline{2-2} \cline{4-4}
$(m,n)$ & Iter. ($k_{\min}$ / $k_{\max}$) / Time ($t_{\min}$ / $t_{\max}$) / Err / SR && Iter. ($k_{\min}$ / $k_{\max}$) / Time ($t_{\min}$ / $t_{\max}$) / Err / SR \\ \midrule
$( 3, 5)$ &  20.86 (  4 / 285) / 0.09  (0.02 / 1.20)  / 1.24$\times 10^{-6}$ / 0.97 && 18.28 (  4 / 293) / 0.08  (0.02 / 1.23)  / 1.31$\times 10^{-6}$ / 0.98 \\
$( 3,10)$ &  46.35 (  6 / 1524) / 0.20  (0.02 / 6.52)  / 8.80$\times 10^{-7}$ / 0.94 && 56.03 (  6 / 844) / 0.24  (0.03 / 3.60)  / 1.39$\times 10^{-6}$ / 0.97 \\
$( 3,20)$ & 88.18 (  8 / 1229) / 0.38  (0.03 / 5.34)  / 9.31$\times 10^{-7}$ / 0.99 && 135.61 (  8 / 551) / 0.59  (0.03 / 2.39)  / 1.45$\times 10^{-6}$ / 1.00\\ \midrule
$( 4, 5)$ & 47.45 (  5 / 965) / 0.21  (0.02 / 4.27)  / 9.84$\times 10^{-7}$ / 0.97 && 28.34 (  5 / 191) / 0.13  (0.02 / 0.84)  / 1.75$\times 10^{-6}$ / 1.00\\
$( 4,10)$ & 62.09 (  8 / 397) / 0.28  (0.03 / 1.76)  / 1.46$\times 10^{-6}$ / 0.92 && 58.61 (  7 / 252) / 0.26  (0.03 / 1.11)  / 1.47$\times 10^{-6}$ / 1.00\\
$( 4,20)$ & 123.13 ( 10 / 615) / 0.58  (0.05 / 2.89)  / 1.24$\times 10^{-6}$ / 0.97 && 332.71 ( 15 / 1933) / 1.55  (0.08 / 9.06)  / 1.10$\times 10^{-6}$ / 0.98 \\ \midrule
$( 5, 5)$ & 18.95 (  5 /  72) / 0.09  (0.02 / 0.34)  / 1.17$\times 10^{-6}$ / 0.95 && 30.49 (  7 / 145) / 0.14  (0.03 / 0.67)  / 1.29$\times 10^{-6}$ / 0.99\\
$( 5,10)$ & 54.46 ( 10 / 439) / 0.26  (0.05 / 2.07)  / 1.63$\times 10^{-6}$ / 0.97 && 81.99 (  8 / 468) / 0.39  (0.05 / 2.20)  / 1.37$\times 10^{-6}$ / 0.99 \\
$( 5,20)$ & 139.13 ( 15 / 1250) / 2.69  (0.28 / 24.21)  / 1.35$\times 10^{-6}$ / 0.95 && 499.55 ( 24 / 1908) / 9.69  (0.47 / 37.08)  / 1.54$\times 10^{-6}$ / 0.93\\ \midrule
$( 6, 5)$ & 42.87 (  7 / 495) / 0.21  (0.03 / 2.34)  / 1.35$\times 10^{-6}$ / 0.97 && 45.54 (  9 / 326) / 0.22  (0.03 / 1.54)  / 1.09$\times 10^{-6}$ / 1.00 \\
$( 6,10)$ & 91.54 ( 10 / 742) / 0.66  (0.06 / 5.43)  / 1.36$\times 10^{-6}$ / 0.92 && 104.55 ( 16 / 697) / 0.76  (0.11 / 4.96)  / 1.01$\times 10^{-6}$ / 1.00 \\
$( 6,15)$ & 176.05 ( 12 / 1188) / 11.24  (0.75 / 76.16)  / 1.81$\times 10^{-6}$ / 0.88 && 274.35 ( 17 / 1421) / 17.53  (1.06 / 90.87)  / 1.27$\times 10^{-6}$ / 0.99 \\
\bottomrule
\end{tabular*}
\end{center}
\end{sidewaystable}

In Tables \ref{table1} and \ref{table2}, we report the results for the case $p\equiv q=m$ with four scenarios on tensors. From the data, we can see that most of the random problems (even with general tensors) can be solved successfully in the preset maximum iteration. When both ${\mathcal A}$ and ${\mathcal B}$ are ${\mathcal M}$-tensors, the generalized Newton method performs best in terms of taking the least average iterations and the highest success rate. For the other three scenarios on tensors, it seems that the number of iterations is proportional to the dimensionality $n$. However, the proposed method is still highly probabilistic reliable to the problem under test.

\setlength\rotFPtop{0pt plus 1fil}
\begin{sidewaystable}
\begin{center}
\caption{Computational results for the cases $p\neq q$ with (i) $({\mathcal A},{\mathcal B})$ are ${\mathcal M}$-tensors, and (ii) ${\mathcal A}$ is an ${\mathcal M}$-tensor and ${\mathcal B}$ is a general tensor.}\vskip 0.2mm
\label{table3}
\small
\def\temptablewidth{1\textwidth}
\begin{tabular*}{\temptablewidth}{@{\extracolsep{\fill}}llll}\toprule
& (i) $({\mathcal A},{\mathcal B})$ are ${\mathcal M}$-tensors && (ii) ${\mathcal A}$ is an ${\mathcal M}$-tensor and ${\mathcal B}$ is a general tensor \\\cline{2-2} \cline{4-4}
$(p,q,n)$ & Iter. ($k_{\min}$ / $k_{\max}$) / Time ($t_{\min}$ / $t_{\max}$) / Err / SR && Iter. ($k_{\min}$ / $k_{\max}$) / Time ($t_{\min}$ / $t_{\max}$) / Err / SR \\ \midrule
$( 4,3, 5)$ & 35.47 (  4 / 150) / 0.15  (0.02 / 0.66)  / 1.10$\times 10^{-6}$ / 0.95 && 23.04 (  3 / 140) / 0.10  (0.00 / 0.62)  / 1.66$\times 10^{-6}$ / 0.99 \\
 $( 4,3,10)$ & 93.28 (  4 / 525) / 0.41  (0.02 / 2.29)  / 2.30$\times 10^{-6}$ / 0.99 && 40.77 (  4 / 134) / 0.18  (0.02 / 0.59)  / 2.05$\times 10^{-6}$ / 1.00 \\
 $( 4,3,20)$ & 121.19 (  4 / 1279) / 0.54  (0.02 / 5.68)  / 2.84$\times 10^{-6}$ / 0.98 && 54.37 (  4 / 173) / 0.24  (0.02 / 0.78)  / 2.70$\times 10^{-6}$ / 0.98  \\ \hline
$( 5,4, 5)$ & 4.54 (  3 /  35) / 0.02  (0.02 / 0.17)  / 1.03$\times 10^{-6}$ / 0.97 && 5.00 (  3 /   7) / 0.02  (0.02 / 0.03)  / 1.12$\times 10^{-6}$ / 1.00 \\
 $( 5,4,10)$ & 4.31 (  3 /   5) / 0.02  (0.00 / 0.03)  / 1.26$\times 10^{-6}$ / 1.00 && 4.89 (  3 /   6) / 0.02  (0.00 / 0.03)  / 9.98$\times 10^{-7}$ / 1.00 \\
 $( 5,4,20)$ & 4.50 (  3 /   5) / 0.05  (0.03 / 0.06)  / 1.09$\times 10^{-6}$ / 1.00 && 4.67 (  3 /   6) / 0.05  (0.03 / 0.06)  / 1.14$\times 10^{-6}$ / 1.00 \\ \hline
$( 6,4, 5)$ & 46.38 (  3 / 551) / 0.22  (0.02 / 2.53)  / 1.52$\times 10^{-6}$ / 0.98 && 48.04 (  4 / 243) / 0.23  (0.02 / 1.12)  / 1.88$\times 10^{-6}$ / 0.98  \\
 $( 6,4,10)$ & 80.79 (  3 / 312) / 0.47  (0.02 / 1.76)  / 2.57$\times 10^{-6}$ / 0.99 && 88.90 (  3 / 473) / 0.51  (0.02 / 2.73)  / 2.53$\times 10^{-6}$ / 0.99 \\
 $( 6,4,15)$ & 129.11 (  4 / 685) / 4.45  (0.12 / 23.71)  / 2.93$\times 10^{-6}$ / 0.98 && 111.10 (  4 / 693) / 3.83  (0.12 / 23.90)  / 3.77$\times 10^{-6}$ / 1.00  \\ \hline
$( 6,5, 5)$ & 67.62 (  3 / 823) / 0.32  (0.02 / 3.82)  / 9.76$\times 10^{-7}$ / 0.92 && 42.61 (  3 / 329) / 0.20  (0.02 / 1.53)  / 1.69$\times 10^{-6}$ / 0.98  \\
 $( 6,5,10)$ & 178.96 (  4 / 814) / 1.07  (0.02 / 4.76)  / 2.11$\times 10^{-6}$ / 0.96 && 77.94 (  3 / 336) / 0.46  (0.02 / 2.03)  / 2.23$\times 10^{-6}$ / 0.98 \\
 $( 6,5,15)$ & 289.66 (  4 / 1309) / 10.42  (0.12 / 47.28)  / 2.14$\times 10^{-6}$ / 0.97 && 125.82 (  3 / 471) / 4.52  (0.09 / 16.85)  / 2.54$\times 10^{-6}$ / 0.98 \\ \midrule
$( 3,4, 5)$ & 10.33 (  3 /  53) / 0.05  (0.02 / 0.23)  / 1.42$\times 10^{-6}$ / 0.96 && 55.83 (  4 / 1063) / 0.25  (0.02 / 4.68)  / 1.08$\times 10^{-6}$ / 0.84 \\
 $( 3,4,10)$ & 5.14 (  4 /  21) / 0.02  (0.02 / 0.09)  / 7.96$\times 10^{-7}$ / 1.00 && 643.21 (  5 / 1739) / 2.83  (0.02 / 7.68)  / 1.66$\times 10^{-6}$ / 0.29  \\
 $( 3,4,20)$ & 4.12 (  4 /   5) / 0.02  (0.02 / 0.03)  / 1.14$\times 10^{-6}$ / 1.00 && 822.75 (  6 / 1957) / 3.72  (0.02 / 8.91)  / 1.78$\times 10^{-6}$ / 0.16  \\ \hline
$( 4,5, 5)$ & 6.74 (  3 / 108) / 0.03  (0.00 / 0.48)  / 1.07$\times 10^{-6}$ / 1.00 && 11.11 (  4 / 159) / 0.05  (0.02 / 0.70)  / 7.73$\times 10^{-7}$ / 0.83 \\
 $( 4,5,10)$ & 4.42 (  3 /   5) / 0.02  (0.00 / 0.03)  / 1.04$\times 10^{-6}$ / 1.00 && 36.66 (  5 / 429) / 0.17  (0.02 / 1.97)  / 1.86$\times 10^{-6}$ / 0.67 \\
 $( 4,5,20)$ & 4.27 (  4 /   5) / 0.04  (0.03 / 0.06)  / 9.58$\times 10^{-7}$ / 1.00 && 171.83 ( 11 / 1255) / 1.78  (0.11 / 13.14)  / 1.93$\times 10^{-6}$ / 0.83 \\ \hline
$( 4,6, 5)$ & 4.62 (  3 /   6) / 0.02  (0.02 / 0.03)  / 8.57$\times 10^{-7}$ / 1.00 && 23.40 (  5 / 542) / 0.11  (0.02 / 2.45)  / 1.74$\times 10^{-6}$ / 0.52 \\
 $( 4,6,10)$ & 4.35 (  3 /   5) / 0.02  (0.02 / 0.03)  / 1.10$\times 10^{-6}$ / 1.00 && 114.85 (  7 / 1686) / 0.67  (0.03 / 9.81)  / 1.82$\times 10^{-6}$ / 0.52 \\
 $( 4,6,15)$ & 4.29 (  3 /   5) / 0.14  (0.09 / 0.17)  / 1.29$\times 10^{-6}$ / 1.00 && 330.00 ( 11 / 1859) / 11.15  (0.37 / 58.41)  / 2.03$\times 10^{-6}$ / 0.58 \\ \hline
$( 5,6, 5)$ & 5.00 (  3 /  51) / 0.03  (0.02 / 0.23)  / 1.53$\times 10^{-6}$ / 1.00 && 34.42 (  5 / 206) / 0.16  (0.02 / 0.97)  / 1.32$\times 10^{-6}$ / 0.89 \\
 $( 5,6,10)$ & 4.82 (  3 /  53) / 0.03  (0.02 / 0.33)  / 9.36$\times 10^{-7}$ / 1.00 && 74.97 (  7 / 473) / 0.44  (0.03 / 2.82)  / 1.79$\times 10^{-6}$ / 0.60 \\
 $( 5,6,15)$ & 4.34 (  3 /   5) / 0.14  (0.09 / 0.17)  / 1.07$\times 10^{-6}$ / 1.00 && 100.04 ( 24 / 647) / 3.59  (0.84 / 23.34)  / 1.54$\times 10^{-6}$ / 0.55  \\
\bottomrule
\end{tabular*}
\end{center}
\end{sidewaystable}

\setlength\rotFPtop{0pt plus 1fil}
\begin{sidewaystable}
\begin{center}
\caption{Computational results for the cases $p\neq q$ with (iii) ${\mathcal A}$ is a general tensor and ${\mathcal B}$ is an ${\mathcal M}$-tensor, and (iv) $({\mathcal A},{\mathcal B})$ are general tensors}\vskip 0.2mm
\label{table4}
\small
\def\temptablewidth{1\textwidth}
\begin{tabular*}{\temptablewidth}{@{\extracolsep{\fill}}llll}\toprule
& (iii) ${\mathcal A}$ is a general tensor and ${\mathcal B}$ is an ${\mathcal M}$-tensor && (iv) $({\mathcal A},{\mathcal B})$ are general tensors\\\cline{2-2} \cline{4-4}
$(p,q,n)$ & Iter. ($k_{\min}$ / $k_{\max}$) / Time ($t_{\min}$ / $t_{\max}$) / Err / SR && Iter. ($k_{\min}$ / $k_{\max}$) / Time ($t_{\min}$ / $t_{\max}$) / Err / SR \\ \midrule
$( 4,3, 5)$ & 24.57 (  5 / 146) / 0.11  (0.02 / 0.64)  / 1.33$\times 10^{-6}$ / 0.95 && 23.38 (  4 / 180) / 0.10  (0.02 / 0.78)  / 1.04$\times 10^{-6}$ / 1.00 \\
 $( 4,3,10)$ & 43.85 (  7 / 242) / 0.19  (0.03 / 1.06)  / 6.69$\times 10^{-7}$ / 0.99 && 53.00 (  9 / 247) / 0.23  (0.05 / 1.08)  / 1.20$\times 10^{-6}$ / 1.00 \\
 $( 4,3,20)$ & 155.53 ( 12 / 1496) / 0.70  (0.05 / 6.72)  / 1.32$\times 10^{-6}$ / 0.99 && 178.52 ( 18 / 788) / 0.80  (0.08 / 3.49)  / 9.97$\times 10^{-7}$ / 1.00 \\ \hline
$( 5,4, 5)$ & 19.47 (  5 / 145) / 0.09  (0.03 / 0.66)  / 1.30$\times 10^{-6}$ / 0.99 && 38.46 (  6 / 409) / 0.18  (0.02 / 1.86)  / 1.24$\times 10^{-6}$ / 1.00 \\
 $( 5,4,10)$ & 48.07 ( 10 / 347) / 0.22  (0.05 / 1.59)  / 1.06$\times 10^{-6}$ / 0.97 && 69.40 (  9 / 237) / 0.32  (0.05 / 1.09)  / 1.40$\times 10^{-6}$ / 0.99 \\
 $( 5,4,20)$ & 115.90 ( 16 / 704) / 1.20  (0.17 / 7.43)  / 9.66$\times 10^{-7}$ / 0.99 && 282.64 ( 16 / 1824) / 2.93  (0.16 / 18.77)  / 1.69$\times 10^{-6}$ / 1.00 \\ \hline
$( 6,4, 5)$ & 28.68 (  6 / 166) / 0.14  (0.03 / 0.75)  / 1.09$\times 10^{-6}$ / 0.95 && 39.70 (  6 / 234) / 0.19  (0.03 / 1.09)  / 1.17$\times 10^{-6}$ / 1.00 \\
 $( 6,4,10)$ & 68.68 ( 10 / 378) / 0.39  (0.05 / 2.17)  / 1.00$\times 10^{-6}$ / 0.97 && 73.16 ( 11 / 306) / 0.42  (0.06 / 1.76)  / 1.25$\times 10^{-6}$ / 1.00 \\
 $( 6,4,15)$ & 156.38 ( 13 / 894) / 5.39  (0.44 / 30.92)  / 1.23$\times 10^{-6}$ / 0.99 && 170.83 ( 13 / 679) / 5.90  (0.44 / 23.45)  / 1.49$\times 10^{-6}$ / 1.00 \\\hline
$( 6,5, 5)$ & 32.94 (  7 / 250) / 0.16  (0.03 / 1.20)  / 1.20$\times 10^{-6}$ / 0.97 && 33.02 (  6 / 166) / 0.16  (0.03 / 0.78)  / 1.33$\times 10^{-6}$ / 1.00 \\
 $( 6,5,10)$ & 68.08 ( 11 / 427) / 0.41  (0.06 / 2.54)  / 1.14$\times 10^{-6}$ / 0.98 && 78.88 ( 16 / 623) / 0.47  (0.09 / 3.71)  / 1.74$\times 10^{-6}$ / 1.00 \\
 $( 6,5,15)$ &  113.35 ( 17 / 629) / 4.07  (0.58 / 22.74)  / 1.18$\times 10^{-6}$ / 0.96 && 200.24 ( 11 / 1016) / 7.20  (0.37 / 36.77)  / 1.11$\times 10^{-6}$ / 1.00 \\ \midrule
$( 3,4, 5)$ & 26.18 (  3 / 304) / 0.11  (0.02 / 1.33)  / 1.03$\times 10^{-6}$ / 0.92 && 18.13 (  4 / 151) / 0.08  (0.02 / 0.67)  / 1.21$\times 10^{-6}$ / 0.95 \\
 $( 3,4,10)$ & 62.61 (  3 / 694) / 0.28  (0.02 / 3.06)  / 1.11$\times 10^{-6}$ / 0.92 && 112.84 (  8 / 633) / 0.49  (0.03 / 2.79)  / 1.23$\times 10^{-6}$ / 0.79 \\
 $( 3,4,20)$ & 129.33 (  4 / 1074) / 0.58  (0.02 / 4.79)  / 1.37$\times 10^{-6}$ / 0.91 && 1099.80 (517 / 1789) / 4.97  (2.34 / 8.14)  / 1.58$\times 10^{-6}$ / 0.10 \\ \hline
$( 4,5, 5)$ & 33.05 (  5 / 513) / 0.15  (0.02 / 2.31)  / 1.52$\times 10^{-6}$ / 0.94 && 34.04 (  5 / 544) / 0.15  (0.02 / 2.45)  / 1.38$\times 10^{-6}$ / 0.98 \\
 $( 4,5,10)$ & 78.41 (  9 / 440) / 0.36  (0.05 / 2.03)  / 1.72$\times 10^{-6}$ / 0.80 && 142.97 (  9 / 595) / 0.66  (0.05 / 2.71)  / 1.10$\times 10^{-6}$ / 0.86 \\
 $( 4,5,20)$ & 238.63 ( 11 / 1749) / 2.47  (0.11 / 18.13)  / 1.06$\times 10^{-6}$ / 0.93 && 914.82 ( 70 / 1990) / 9.45  (0.73 / 20.56)  / 5.25$\times 10^{-7}$ / 0.17 \\ \hline
$( 4,6, 5)$ & 33.65 (  3 / 415) / 0.16  (0.02 / 1.89)  / 1.38$\times 10^{-6}$ / 0.92 && 54.63 (  5 / 823) / 0.26  (0.02 / 3.74)  / 1.38$\times 10^{-6}$ / 0.93 \\
 $( 4,6,10)$ & 100.46 (  4 / 1048) / 0.58  (0.02 / 5.97)  / 1.40$\times 10^{-6}$ / 0.85 && 180.58 ( 11 / 753) / 1.05  (0.06 / 4.38)  / 1.19$\times 10^{-6}$ / 0.62 \\
 $( 4,6,15)$ & 189.75 (  4 / 1777) / 6.52  (0.12 / 61.34)  / 1.66$\times 10^{-6}$ / 0.79 && 598.71 ( 10 / 1983) / 20.64  (0.33 / 68.38)  / 4.80$\times 10^{-7}$ / 0.42 \\ \hline
$( 5,6, 5)$ & 36.48 (  5 / 400) / 0.17  (0.03 / 1.83)  / 1.32$\times 10^{-6}$ / 0.96 && 30.97 (  5 / 215) / 0.15  (0.03 / 1.00)  / 1.60$\times 10^{-6}$ / 1.00 \\
 $( 5,6,10)$ & 86.46 (  8 / 581) / 0.51  (0.05 / 3.40)  / 1.77$\times 10^{-6}$ / 0.91 && 135.49 (  5 / 744) / 0.81  (0.03 / 4.37)  / 1.28$\times 10^{-6}$ / 0.92 \\
 $( 5,6,15)$ &151.18 ( 11 / 942) / 5.43  (0.39 / 33.79)  / 1.45$\times 10^{-6}$ / 0.89 && 617.83 ( 18 / 1929) / 22.23  (0.62 / 69.53)  / 9.68$\times 10^{-7}$ / 0.81 \\ \bottomrule
\end{tabular*}
\end{center}
\end{sidewaystable}

As we have mentioned above, although our solutions existence theorems are established for the case $p\geq q$, the proposed generalized Newton method does not rely on such a relation $p\geq q$. Therefore, in Tables \ref{table3} and \ref{table4}, we correspondingly consider the two cases $p>q$ and $p<q$ with the four scenarios on tensors. It can be seen from the results that the generalized Newton method performs well for the case $p>q$, especially for the case with two general tensors ${\mathcal A}$ and ${\mathcal B}$ (see Table \ref{table4}). When dealing with the case $p<q$, the best performance of the generalized Newton method corresponds to the scenario that both ${\mathcal A}$ and ${\mathcal B}$ are ${\mathcal M}$-tensors.

From all the data reported in this section, it is not difficult to see that the generalized Newton method is a reliable solver for most of TAVEs. Here, we shall notice that, for the failure cases, the generalized Newton method \eqref{GNewton} can successfully find a solution to TAVEs if the starting point $x_0$ is sufficiently near the solution. It means that the starting point would affect the performance of the proposed method for TAVEs. However, we completely do not know where is the solution for real-world problems. So, we use the aforementioned constant starting point $x_0$ throughout the experiments for the purpose of investigating the real performance of \eqref{GNewton} on TAVEs.  Meanwhile, one question raised is that can we design an algorithm which is independent on initial points? We would like to leave it as our future work.

\section{Conclusions}\label{Conclusion}
In this paper, we considered the system of TAVEs, which is an interesting generalization of the classical absolute value equations in the matrix case. By the employment of degree theory, we showed that the solutions set of the system of TAVEs with $p>q$ is nonempty and compact. Moreover, by the utility of fixed point theory, we proved that TAVEs with $p=q$ has at least one solution under some checkable conditions. However, we did not give the answer when such a problem has a unique solution for the case where ${\mathcal B}$ is not a negative unit tensor. Moreover, what will happen when we consider the case where TAVEs with the setting of $p<q$? In the future, we would like to try to answer these questions. On the other hand, our numerical results show that the generalized Newton method performs well in many cases. However, it still fails in some cases. So, can we design structure-exploiting algorithms which are independent on the starting point? This is also one of our future concerns.

\medskip
\begin{acknowledgements}
C. Ling and H. He were supported in part by National Natural Science Foundation of China (Nos. 11571087 and 11771113) and Natural Science Foundation of Zhejiang Province (LY17A010028). L. Qi was supported by the Hong Kong Research Grant Council (Grant Nos. PolyU 15302114, 15300715, 15301716 and 15300717).
\end{acknowledgements}

%\bibliographystyle{spmpsci}
%\bibliography{E:/Research/JabRef/HeArt,E:/Research/JabRef/Polynomial,E:/Research/JabRef/HeBook}

\end{document}